\documentclass{article}

\usepackage[utf8]{inputenc} 
\usepackage[T1]{fontenc} 
\usepackage{lmodern} 

\usepackage[top=3cm, includefoot]{geometry}

\usepackage{amsmath,amssymb,amsfonts,amsthm}
\usepackage{accents}
\usepackage{dsfont}

\usepackage{xifthen} 
\usepackage{graphicx} 
\usepackage{xcolor} 
\usepackage{tabularx}
\usepackage{subcaption}

\usepackage{tikz}
\usetikzlibrary{arrows.meta}
\usetikzlibrary{lindenmayersystems}
\usetikzlibrary{backgrounds}

\usetikzlibrary{decorations.pathreplacing, intersections, calc, angles, quotes, fadings, scopes, hobby, decorations.text, decorations.pathreplacing, shapes.geometric, arrows.meta, shadows}

\usepackage{pgfplots}

\newtheoremstyle{nopoint} 
  {3pt}          
  {3pt}          
  {\itshape}     
  {}             
  {\bfseries}    
  {}             
  {.5em}         
  {}             

\theoremstyle{nopoint}

\newtheorem{theorem}{Theorem}[section]
\newtheorem{definition}[theorem]{Definition}
\newtheorem{lemma}[theorem]{Lemma}

\newtheorem{remark}[theorem]{Remark}
\newtheorem{proposition}[theorem]{Proposition}

\usepackage{hyperref}
\usepackage{cleveref}

\usepackage{pdfpages}

\usepackage{todonotes}

\usepackage[all]{xy} 

\newcommand{\RR}{\mathbb{R}}
\newcommand{\NN}{\mathbb{N}}
\newcommand{\ZZ}{\mathbb{Z}}

\newcommand{\ee}{\mathrm{e}}
\newcommand{\OO}{\mathcal{O}}
\renewcommand{\gg}{\mathtt{g}}

\newcommand{\lee}{<}

\renewcommand{\epsilon}{\varepsilon}

\DeclareMathOperator{\dist}{dist}
\DeclareMathOperator{\interior}{int}
\DeclareMathOperator{\conv}{conv}
\DeclareMathOperator{\hol}{Hp}
\DeclareMathOperator{\hv}{Hv}

\counterwithin{figure}{section}

\usepackage{graphicx} 

\title{The regular n-flake dust in $\RR^2$ is not Minkowski measurable}
\author{Uta Freiberg, Jonas Lippold}
\date{\today}

\begin{document}

\maketitle

\abstract{    A long-standing conjecture of Lapidus asserts that under certain conditions  a self-similar fractal set is not Minkowski measurable if and only if it is of lattice-type. For self-similar sets in $\RR$, the Lapidus conjecture has been confirmed. However, in higher dimensions, it remains unclear whether all lattice-type self-similar sets are not Minkowski measurable. This work presents families of lattice-type subsets in $\RR^2$ that are not Minkowski measurable, hence providing further support for the conjecture.}

\section{Introduction}

The Minkowski content  can be viewed as a tool for describing the geometry of a fractal set beyond their (Hausdorff or Minkowski) dimension, particularly for distinguishing between sets of the same dimension. This makes it a natural question to determine which sets are Minkowski measurable and to understand the mechanisms that lead to measurability or its failure.

Over the past decades, substantial progress has been made in the study of Minkowski measurability, especially in the context of self-similar sets generated by iterated function systems (IFS) satisfying the open set condition (OSC). A central theme in this area is the relationship between geometric properties of the attractor and algebraic properties of the underlying system. In this direction, Lapidus formulated a conjecture in the 1990s asserting that Minkowski measurability should be equivalent to the non-lattice property of the IFS, see \cite{Lapidus}. This condition depends only on the logarithms of the contraction ratios and is therefore easy to verify, making it particularly appealing as a potential characterization. 

The non-lattice case is by now well understood: Gatzouras \cite{Gatzouras} showed that self-similar sets in $\RR^d$ satisfying the OSC are Minkowski measurable whenever the associated IFS is non-lattice. In contrast, the lattice case presents significantly greater difficulties. In dimension one, the conjecture has been completely resolved through the work of several authors, culminating in a full characterization of Minkowski measurability for self-similar subsets of $\RR$ \cite{Falconer:Minkowski, ReellerFall:neu}. However, for dimensions $d \geq 2$, the situation remains far from settled, and a complete understanding is still lacking. We refer to \cite{Kombrink:survay:on:MM} for a comprehensive overview of the current state of the art.

In higher dimensions, available results typically require additional structural assumptions. One prominent example is the pluriphase condition, which yields non-Minkowski measurability for certain lattice self-similar sets under suitable hypotheses \cite{Pluriphase}. This approach, as well as several related results, is based on renewal-theoretic methods and the analysis of scaling functions associated with the parallel volume. Another line of research focuses on exact tube formulas and employs techniques from the theory of complex dimensions and fractal zeta functions. While this framework has proven to be very powerful and has led to deep insights, it often involves technically demanding computations and is not always easily applicable to concrete families of sets.

The main objective of this paper is to investigate the Minkowski measurability of a class of self-similar sets in $\RR^2$, namely the so-called \textit{$n$-flake dusts}. These sets form families of lattice self-similar sets satisfying the open set condition, and thus provide a suitable testing ground for the conjectured relationship between lattice structure and non-measurability.

Our main result shows that, under a geometric condition on the contraction ratio, $n$-flake dusts fail to be Minkowski measurable. The proof is based on a detailed analysis of the volume of parallel sets and exploits the presence of hole points, a notion introduced in Subsection \ref{sec:hole_point}, which induce a lack of smoothness in the associated volume function. More precisely, we show that the function $\varepsilon \mapsto \lambda^2(F_\varepsilon)$ is not twice differentiable at a critical scale, which implies that the corresponding normalized volume function cannot be constant. This approach provides a new mechanism for establishing non-Minkowski measurability in the lattice case and thus contribute further evidence toward the conjectured characterization of Minkowski measurability beyond one dimension. 

The paper is organized as follows. Section~\ref{sec:preliminaries} contains the required preliminaries (see \ref{ssec: M.-mb.} to \ref{sec:hole_point}) together with a brief overview of known results (see \ref{ssec: known_results}). In Section~\ref{sec:n-flake}, we introduce the class of $n$-flake dusts and discuss their relation to the general framework. Section~\ref{sec:main-result} is devoted to the proof of the main result on Minkowski non-measurability of $n$-flake dusts.


\section{Preliminaries}
\label{sec:preliminaries}

The necessary terminology is established first to enable the formulation and proof of the main results. See \cite{Falconer:FracGeo} for background on fractal geometry. We follow the notation and terminology introduced in \cite{freiberg2026family} and add the concept of hole points.

\subsection{Minkowski measurability}
\label{ssec: M.-mb.}

Let  $(\RR^d, \| \cdot \|)$ be the $d$-dimensional Euclidean space and denote by 
$$\mathcal{H}(\RR^d) := \{ K \subset \RR^d \mid K \not= \emptyset \ \text{and} \ K \ \text{is compact} \}$$
the corresponding \emph{Hausdorff space}. For $A \in \mathcal{H}(\RR^d)$,  $\varepsilon > 0$ define the \emph{$\varepsilon$- parallel set} of $A$ as $$ A_\varepsilon := \{x \in \RR^d \mid \dist(x, A) \leq \varepsilon \},$$
where $\dist (A, B):=\inf\{ \| x-y\| \mid x\in A, y\in B \}$ denotes the distance of two non-empty sets $A,B\subset \RR^d$ and for a point $x \in \RR^d$ we write $\dist (x, B):=\dist (\{x\}, B)$. Denote by $\lambda^d$ the $d$-dimensional Lebesgue measure. If the \emph{Minkowski dimension} $$\dim_{\mathcal{M}}(A):=d-\lim_{\varepsilon \searrow 0} \frac{\ln \lambda^d(A_\varepsilon)}{\ln\varepsilon}$$ exists, then the \emph{Minkowski content} of $A$ is defined by
 $$\mathcal{M}(A):= \lim_{\varepsilon \searrow 0} \frac{\lambda^d (A_\varepsilon)}{\varepsilon^{d-\dim_{\mathcal{M}}(A)}},$$

provided this limit also exists in $[0, \infty]$. The set $A$ is called $\emph{Minkowski measurable}$ if $\mathcal{M}(A)$ exists and is both positive and finite.

\subsection{Self-similar sets, open set condition, and the (non-)lattice case}

A finite set of contractions with at least two maps is called a iterated function system (IFS). Let $S=\{ S_1, \ldots ,S_N\}$, with $N\geq 2$, denote an IFS consisting of contractive similarities $S_1, \ldots ,S_N$ acting on  $\RR^d$. Such a system is called a \emph{self-similar system}. A set $F \subset \RR^d$ is called \emph{self-similar} if $$F=\bigcup_{i=1}^n S_i(F)$$ 
for some self-similar system.

For a set $A$ denote by $\mathcal{P}(A)$ the power set of $A$. Define the to the self-similar system $S$ corresponding (set-valued) map $ \mathbf{S} : \mathcal{P}(\RR^d) \to \mathcal{P}(\RR^d) $ by
\begin{equation}
\label{eq:fett_S}
    \mathbf{S}A:=\bigcup_{i=1}^{N} S_i(A).
\end{equation}

It is well known (\cite{Hut}) that $ \mathbf{S}\vert_{\mathcal{H}(\RR^d)} : \mathcal{H}(\RR^d) \to \mathcal{H}(\RR^d) $ has a unique fixed point $F$ which is called the \emph{(self-similar) attractor} of the self-similar system $S$.

The IFS $S$ is said to satisfy the \emph{open set condition} (OSC) if there exits a non-empty bounded open set $O \subset \RR^d $ such that  
\begin{equation}
\label{eq:osc}
    \bigcup_{i=1}^N S_i(O) \subseteq O \  \ \text{and} \ \  S_i (O) \cap S_j (O) = \emptyset \ \forall \  i,j= 1, \ldots ,N \ \text{with} \  i \not = j.
\end{equation}
A self-similar system that satisfies the (OSC) is called an \emph{(OSC) self-similar system}. Any non-empty bounded open set $O$ satisfying \eqref{eq:osc} is called a \emph{feasible open set} for $S$. If $S$ satisfies the (OSC) with feasible set $O$ and in addition $O \cap F \not = \emptyset $ holds, then we say $S$ satisfies the \emph{strong open set condition} (SOSC) and $O $ is called a \emph{strong feasible open set}. Note that it was shown in \cite{Schief} that if a self-similar system satisfies (OSC), then it possesses a strong feasible open set. 

Denote the scaling ratio of $S_i$ by $r_i$, $i=1,\ldots , N$. The self-similar system $S$ is said to be \emph{lattice} if $\{ \ln r_1, \ldots , \ln r_N \}$ generates a discrete subgroup of $(\RR, +).$ Otherwise, $S$ is said to be  \emph{non-lattice}. If $S$ is lattice, then there exists a largest $a>0$ such that $\{ \ln r_1, \ldots , \ln r_N \} \subseteq a\ZZ$ and $\ee^a$ is called the \emph{base} of $S$.

\subsection{Non-trivial sets and projection condition}
Let $F \in \mathcal{H}(\RR^d)$ be the attractor of a (OSC) self-similar system $S =\{S_1, \ldots ,S_N \}$. $F$ is called \emph{non-trivial} if there exists a feasible open set $O$ such that
\begin{equation}
    O \not \subseteq \overline{\mathbf{S}O},
\end{equation}
where $\overline{B}$ denotes the closure of $B \subseteq \RR^d$; otherwise, $F$ is called \emph{trivial}. $F$ is non-trivial if and only if $F$ has empty interior, which is equivalent to $F$ having a Minkowski dimension strictly less than $d$, as shown in \cite{Pears:tilings}. In particular, non-triviality is independent of the choice of the feasible set $O$.

Let $\pi_A$ denote the \emph{metric projection} onto $A$ for a nonempty compact set $A \in \mathcal{H}(\RR^d)$, which is defined for points $x \in \RR^d$ that have a unique nearest point $y$ in $A$ by $$\pi_F(x):= y.$$ The set $O$ is said to satisfy the \emph{projection condition}, if $$S_i(O) \subseteq \overline{\pi_F^{-1}(S_i F)}$$ for all $i=1,\ldots ,N$. It is worth noting that, as long as (OSC) holds, it is always possible to find a strong feasible open set that satisfies the projection condition (see \cite{Winter:MinContent}).

\subsection{Pluriphase condition}
\label{ssec: Pl_cond}
For a given self-similar system $S$ with non-trivial attractor $F \in \mathcal{H}(\RR^d)$ and a fixed feasible open set $O$, define $$\Gamma := \Gamma_{O} := O \setminus \mathbf{S}O$$ and $$g:=\sup\limits_{x\in\Gamma} \{\min\limits_{y\in F}\{\|x-y\|\}\}.$$ The set $F$ is said to be \emph{pluriphase  with respect to} $\Gamma_{O}$ if there exists a finite partition $0=: a_0 < a_1<\ldots <a_M:= g < \infty$ of the interval $(0,\infty)$ and constants $\kappa_{m,k} \in \RR, \ m=1,\ldots ,M, \ k =0,\ldots ,d$, such that for all $\varepsilon>0$
\begin{equation}
    \lambda^{d} (F_\varepsilon \cap \Gamma)=\sum_{m=1}^M \mathds{1}_{(a_{m-1},a_m]}(\varepsilon) \sum_{k=0}^d \kappa_{m,k}\varepsilon^{d-k} + \mathds{1}_{(g,\infty)}(\varepsilon)\lambda^{d} (\Gamma).
\end{equation}
Here $\mathds{1}_A$ denotes the indicator function of a set $A$.

\subsection{Hole points}
\label{sec:hole_point}
For a set $A\in \mathcal{H}(\RR^d)$ we introduce our concept of hole points as a special case of the well known concept of critical points of the distance function $d_A(x):= \dist (x, A)$.

For \(A \subseteq \RR^d\), let \(\conv(A)\) and \(\interior(A)\) denote the convex hull and the interior of \(A\), respectively.

\begin{definition}
 Let $A \in \mathcal{H}(\mathbb{R}^d)$. The point $x \in \mathbb{R}^d \setminus A$ is called a \emph{critical point} if  $x \in \conv(\Sigma_A(x))$, where $\Sigma_A(x) := \{ a \in A : \|a - x\| = d(x, A) \}$.   
\end{definition}

\begin{remark} 
 \label{Thm:Fu}
    Let $f \colon \RR^d \to \RR$ be a locally Lipschitz function. For $x\in \RR^d$ denote $\partial^{\circ}f(x)$ the \emph{Clarke generalized gradient} of $f$ at $x$ (also called the \emph{Clarke subdifferential}). Suppose that $p\in \RR^d$ has $0\in \partial^\circ f(p)$, then $p$ is called \emph{critical point of} $f$ (cf. \cite{Clarke1}, Chap. 2, and \cite{Clarke2}).

    According to a result by Fu (\cite{Fu}, Lemma~4.2), for any $A \in \mathcal{H}(\mathbb{R}^d)$, a point $x \in \mathbb{R}^d \setminus A$ is a critical point of the distance function $d_A$ (in the sense of Clarke) if and only if $x$ is a critical point. 
    
\end{remark}

\begin{definition}
   Let $A\in \mathcal{H}(\RR^d)$. A point $x \in \RR^d$ is called \emph{hole point} of $A$ if  $x \in \ \interior(\conv( \Sigma_A(x)))$. Denote by $\hol (A)$ the set of all \emph{hole points} of $A$. The set  $\hv (A) := \{ d(x,A) : x \in \hol (A) \}$ denotes the set of \emph{hole values}.
\end{definition}

\begin{definition}
     Let $A\in \mathcal{H}(\RR^d)$ and let $\epsilon > 0$. A set $H \subset \RR^d$ is called a \emph{hole} of $A_\epsilon$ if $H$ is a bounded connected component of  $\mathbb{R}^d \setminus A_\epsilon$.
\end{definition}

\begin{lemma}
Let $A \in \mathcal{H}(\mathbb{R}^d)$. Then every $x\in\hol(A)$ lies in a hole of $A_\epsilon$ for some $\epsilon>0$.
\end{lemma}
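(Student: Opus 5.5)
Let $x\in\hol(A)$ and set $\delta:=d(x,A)$. We will show that for every $\epsilon$ with $0<\epsilon<\delta$ sufficiently close to $\delta$, the point $x$ lies in a bounded connected component of $\RR^d\setminus A_\epsilon$.

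First, $\delta>0$. Indeed, if $\delta=0$ then $\Sigma_A(x)=\{x\}$, so $\conv(\Sigma_A(x))$ is a single point with empty interior (recall $d\ge 1$). This contradicts $x\in\interior(\conv(\Sigma_A(x)))$. Hence $x\notin A$, and for $0<\epsilon<\delta$ we have $x\notin A_\epsilon$. Let $H_\epsilon$ denote the connected component of $\RR^d\setminus A_\epsilon$ containing $x$. It remains to show that $H_\epsilon$ is bounded.

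The key step uses the interior condition. Since $x\in\interior(\conv(\Sigma_A(x)))$, there is $\rho>0$ with $\overline{B(x,\rho)}\subset\conv(\Sigma_A(x))$. Because $\Sigma_A(x)$ is compact and lies on the sphere $\partial B(x,\delta)$, we argue by contradiction that $\Sigma_A(x)$ meets every closed half-space $\{y:\langle y-x,u\rangle\ge\rho\}$ with $\|u\|=1$. If it missed one such half-space, the whole convex hull would lie in $\{\langle y-x,u\rangle<\rho\}$, contradicting $x+\rho u\in\conv(\Sigma_A(x))$.

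We then build a barrier. Fix $\epsilon<\delta$ and consider the closed balls $\overline{B(a,\epsilon)}$ for $a\in\Sigma_A(x)$; all of these lie in $A_\epsilon$. Set $D:=\bigcup_{a\in\Sigma_A(x)}\overline{B(a,\epsilon)}$. The claim is that once $\epsilon$ is close enough to $\delta$, the set $D$ separates $x$ from infinity. Take any continuous path $\gamma$ starting at $x$ and leaving $\overline{B(x,\delta)}$. It must cross the sphere $\partial B(x,\delta)$ at some point $z=x+\delta u$. By the half-space property there is $a\in\Sigma_A(x)$ with $\langle a-x,u\rangle\ge\rho$. Writing $a=x+\delta v$, this gives $\langle v,u\rangle\ge\rho/\delta$, so
\[
\|z-a\|^2=2\delta^2\bigl(1-\langle u,v\rangle\bigr)\le 2\delta^2-2\delta\rho .
\]
This bound is not yet below $\epsilon^2$, because it only uses $\rho$ and not the density of $\Sigma_A(x)$ on the sphere. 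We expect this to be the main obstacle, and we will fix it as follows.

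Instead of stopping at the sphere, follow $\gamma$ from $x$ until it first reaches $\partial B(x,\delta)$ at $z$, and note that all points of $\gamma$ up to that moment have norm distance at most $\delta$ from $x$. Suppose $\gamma$ avoids $A_\epsilon$. Then every point $y$ on $\gamma$ satisfies $\|y-a\|>\epsilon$ for all $a\in\Sigma_A(x)$. Now use a compactness/continuity argument in $\epsilon$. Let $\epsilon\nearrow\delta$. The set of points in $\overline{B(x,\delta)}$ at distance greater than $\epsilon$ from all of $\Sigma_A(x)$ decreases to the set $K$ of points in $\overline{B(x,\delta)}$ at distance at least $\delta$ from every point of $\Sigma_A(x)$. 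Any $y\in K\setminus\{x\}$ satisfies $\langle y-x,a-x\rangle\le\|y-x\|^2/2$ for all $a\in\Sigma_A(x)$. Averaging over a convex combination representing $x+\rho\,(y-x)/\|y-x\|$ gives $\rho\|y-x\|\le\|y-x\|^2/2$, that is, $\|y-x\|\ge 2\rho$. So near $x$ the set $K$ is just $\{x\}$: it has $x$ as an isolated point, with no other point in $B(x,2\rho)$.

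By compactness of $\overline{B(x,\delta)}$ and the monotone convergence of these closed sets, for $\epsilon$ close to $\delta$ the set $\{y\in\overline{B(x,\delta)}:\dist(y,\Sigma_A(x))\ge\epsilon\}$ lies in $B(x,\rho)\cup\{y:\|y-x\|\ge 2\rho-\eta\}$ for small $\eta$. Hence its component containing $x$ stays in $B(x,\rho)$. Therefore $H_\epsilon\subset B(x,\rho)$, which is bounded, so $H_\epsilon$ is a hole of $A_\epsilon$ containing $x$.
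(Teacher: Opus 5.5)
Your argument is correct, and it takes a genuinely different route from the paper.

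\textbf{The paper's route.} It reduces to finitely many points. It takes $S\subseteq\Sigma_A(x)$ with $x\in\interior(\conv S)$. The vertices of each facet $\Phi$ of $P=\conv S$ lie on a sphere of radius $R_\Phi=\sqrt{d(x,A)^2-h_\Phi^2}<d(x,A)$ inside the facet's hyperplane. Hence every point of $\partial P$ is within $R_{\max}$ of a vertex. So for every $\epsilon\in(R_{\max},d(x,A))$ the explicit barrier $\partial P$ lies in $A_\epsilon$ and traps the component of $x$ in $\interior P$.

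\textbf{Your route.} You keep all of $\Sigma_A(x)$. The averaging identity shows that $x$ is an isolated point of $K=\{y\in\overline{B(x,\delta)}:\dist(y,\Sigma_A(x))\ge\delta\}$. You then pass to $\epsilon<\delta$ by the decreasing-compact-sets argument applied to $K_\epsilon:=\{y\in\overline{B(x,\delta)}:\dist(y,\Sigma_A(x))\ge\epsilon\}$.

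\textbf{What each buys.} The paper gives an explicit interval of admissible $\epsilon$; yours only yields some unspecified $\epsilon_0<\delta$. On the other hand, yours needs no finite reduction, and that is a real advantage here. The paper appeals to Carath\'eodory for a simplex with $k\le d$ and $x$ in its \emph{interior}, and this fails in general. If $\Sigma_A(x)$ is the vertex set of a square centred at $x$, every triangle on three of the vertices has $x$ on its boundary. The paper's step has to be replaced by Steinitz's theorem (at most $2d$ points, giving a polytope rather than a simplex), after which its facet estimate still works. Your argument avoids this issue entirely.

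\textbf{Presentation.} One step deserves an explicit sentence. Since $H_\epsilon$ is open and connected, it is path-connected. A path in $H_\epsilon$ from $x$ that leaves $\overline{B(x,\delta)}$ would, up to its first hitting time of $\partial B(x,\delta)$, be a connected subset of $K_\epsilon$ joining $x$ to a point at distance $\delta$ from $x$. This is impossible once $\rho<\delta$. Hence $H_\epsilon\subseteq K_\epsilon$, and therefore $H_\epsilon\subseteq B(x,\rho)$. The half-space lemma and the abandoned barrier estimate are not used in the final argument and can be dropped.
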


\begin{proof}

 Let $x \in \hol (A)$ and $v_x := d(x, A)$. By definition of $\Sigma_A(x)$, for all $a \in \Sigma_A(x)$, $\|a - x\| = v_x$. By Caratheodory's Theorem, there exists a finite subset $S = \{a_0, \dots, a_k\} \subseteq \Sigma_A(x)$ (with $k \leq d$) such that $x \in \text{int}(\text{conv}(S))$. Let $P := \text{conv}(S)$ be this simplex. 

For each $(d-1)$-dimensional (maximal) facet $\Phi \subset \partial P$, let $h_{\Phi}$ be the distance from $x$ to (the affine hull of) $\Phi$. Since $x \in \text{int}(P)$, we have $h_{\Phi} > 0$ for all facets. The circumradius $R_{\Phi}$ of each facet $\Phi$ is given by $R_{\Phi} = \sqrt{v_x^2 - h_{\Phi}^2} < v_x$.

Let $R_{max} = \max_{\Phi} R_{\Phi}$. Choose $\epsilon$ such that $R_{max} < \epsilon < v_x$. 
First, since $\epsilon < v_x = d(x,A)$, it holds that $x \notin A_\epsilon$. 
Second, consider any path $\gamma(t) = x + tv$ for a unit vector $v \in \mathbb{R}^d$. Since $x \in \text{int}(P)$, every ray must intersect some facet $\Phi$ of $\partial P$ at a point $p$. The distance from $p$ to the nearest vertex $a_i$ of $\Phi$ is at most $R_{\Phi} \leq R_{max} < \epsilon$. Thus, $p \in B(a_i, \epsilon) \subseteq A_\epsilon$. 

Because every path from $x$ to infinity must cross $\partial P \subset A_\epsilon$, $x$ is contained in a bounded connected component of $\mathbb{R}^d \setminus A_\epsilon$.
\end{proof}

\begin{remark}
    The sets $\hol (A)$ and $\hv (A)$ are not necessarily closed. Example:  $n=4$ and $r<1/2$ it is easy to see $\hol (\mathcal{F}^{n,r})$ and $\hv (\mathcal{F}^{n,r})$ are not closed; see Section \ref{sec:n-flake} for the definition of $\mathcal{F}^{n,r}$.
\end{remark}

\subsection{Known results on Minkowski measurability of self-similar sets}
\label{ssec: known_results}
The principal results on Minkowski measurability for self-similar sets in $\RR^d$ are reviewed in order to clarify its relationship with the non-lattice property. A schematic illustration is provided in Figure~\ref{fig:Schaubild}.
\begin{theorem}[Gatzouras \cite{Gatzouras}] \label{thm: nonlattice_folgt_M-mb}
Let $F \in \mathcal{H}(\RR^d)$ be the attractor of a non-lattice (OSC) self-similar system. Then $F$ is  Minkowski measurable.
\end{theorem}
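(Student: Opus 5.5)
This is Gatzouras' theorem, and the plan follows the renewal-theoretic route. Let $D=\dim_{\mathcal{M}}F$ be the similarity dimension, so $\sum_{i=1}^N r_i^D=1$; under the OSC this is also the Minkowski dimension. By the remark following \eqref{eq:osc} (Schief), I choose a strong feasible open set $O$. By Winter's result I may additionally assume the projection condition. I then split the parallel volume along the tiling of $O$ generated by $\Gamma=O\setminus\mathbf{S}O$ and its images under the words of the IFS.

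The first step is a renewal equation. I set $V(\varepsilon):=\lambda^d(F_\varepsilon\cap O)$. By the OSC and the projection condition, for each $i$ the set $F_\varepsilon\cap S_i(O)$ coincides with $S_i(F_{\varepsilon/r_i}\cap O)$. This gives
\[
V(\varepsilon)=\sum_{i=1}^N r_i^d\, V(\varepsilon/r_i)+\lambda^d(F_\varepsilon\cap\Gamma).
\]
Next I pass to logarithmic variables, $t=-\ln\varepsilon$, and put $Z(t):=e^{-(d-D)t}V(e^{-t})$. The equation becomes
\[
Z(t)=\sum_i r_i^D Z(t+\ln r_i)+z(t),
\]
which is a renewal equation with the probability measure $\mu=\sum_i r_i^D\delta_{-\ln r_i}$. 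The forcing term is $z(t)=e^{-(d-D)t}\lambda^d(F_{e^{-t}}\cap\Gamma)$.

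The non-lattice hypothesis says exactly that $\mu$ is non-arithmetic. The key renewal theorem then yields
\[
Z(t)\to \frac{\int_{\RR} z(s)\,ds}{\int s\,d\mu(s)} \quad\text{as } t\to\infty,
\]
provided $z$ is directly Riemann integrable.

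I expect proving direct Riemann integrability of $z$ to be the main obstacle. The plan is to show that $z$ is a.e. continuous, which follows from the monotonicity of $\varepsilon\mapsto\lambda^d(F_\varepsilon\cap\Gamma)$. It then suffices to dominate $z$ by an integrable monotone-type envelope. For $t\to-\infty$ (large $\varepsilon$), $z(t)=e^{-(d-D)t}\lambda^d(\Gamma)$ decays exponentially, because $\Gamma$ is bounded and $D<d$ by non-triviality. For $t\to+\infty$ (small $\varepsilon$) I need the bound $\lambda^d(F_\varepsilon\cap\Gamma)\le C\varepsilon^{d-D+\delta}$ for some $\delta>0$. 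I would first try to obtain this by exploiting that the strong feasible set contains a cylinder $S_w(O)$ disjoint from the region of $\Gamma$ near $F$, which gives an improved dimension estimate. An alternative route is the known estimate $\lambda^d(F_\varepsilon\cap\Gamma)\le C\varepsilon^{d-D'}$ with $D'$ the dimension of $F\cap\partial O$-type pieces, where $D'<D$ under the SOSC.

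Finally I must handle the outer part $F_\varepsilon\setminus O$. It satisfies $\lambda^d(F_\varepsilon\setminus O)=o(\varepsilon^{d-D})$ by the same boundary-dimension estimate, since only points near $\partial O\cap F$ contribute. The limit of $Z$ is positive because $z\ge0$ and $z\not\equiv0$. Together these show that $\mathcal{M}(F)$ exists in $(0,\infty)$, so $F$ is Minkowski measurable.
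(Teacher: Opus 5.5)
The paper gives no proof of this statement; it is quoted from Gatzouras. Gatzouras' argument is also renewal-theoretic, but it is set up for $\lambda^d(F_\varepsilon)$ itself, with the overlap between the neighbourhoods of the first-level pieces $S_iF$ as correction term. You instead follow the generator (tiling) formulation later developed by Winter and by Kombrink, Pearse and Winter. There the projection condition makes the renewal equation for $\lambda^d(F_\varepsilon\cap O)$ exact. The forcing term $\lambda^d(F_\varepsilon\cap\Gamma)$ is nonnegative and equals $\lambda^d(\Gamma)=(1-\sum_i r_i^d)\lambda^d(O)>0$ for large $\varepsilon$, so positivity of the limit is automatic; with an overlap correction it is not immediate. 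The price is the extra term $\lambda^d(F_\varepsilon\setminus O)$. The strategy is sound, but three points need repair.

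First, your normalisation has the wrong sign. You need $Z(t)=e^{(d-D)t}V(e^{-t})=\varepsilon^{D-d}V(\varepsilon)$ and $z(t)=e^{(d-D)t}\lambda^d(F_{e^{-t}}\cap\Gamma)$. With $e^{-(d-D)t}$ the weights come out as $r_i^{2d-D}$, and $z$ blows up as $t\to-\infty$. With the correct sign you get the weights $r_i^D$ and the decay you claim. You also get $Z(t)\le\lambda^d(O)e^{(d-D)t}\to0$ as $t\to-\infty$, which is what lets you identify $Z$ with $\sum_{k\ge0}\mu^{*k}*z$ before invoking the key renewal theorem.

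Second, the statement also covers attractors with interior ($D=d$, e.g.\ $[0,1]$ with non-lattice $r$, as in the paper's remark). Your argument does not apply there, but $\lambda^d(F_\varepsilon)\to\lambda^d(F)\in(0,\infty)$ directly, so treat this case separately.

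Third, and this is the real content, the bound $\lambda^d(F_\varepsilon\cap\Gamma)=O(\varepsilon^{d-D+\delta})$ is only gestured at. Your first formulation does not work: every $S_w(O)$ with $w$ nonempty is disjoint from $\Gamma$, so that property carries no information. A clean argument runs as follows.
\begin{itemize}
\item If $x\in F_\varepsilon\cap\Gamma$ and $y\in S_jF$ is a nearest point of $F$, then $x\notin S_jO$ forces $\dist(y,\partial S_jO)\le\varepsilon$. Hence $\lambda^d(F_\varepsilon\cap\Gamma)\le\sum_j r_j^d\,\lambda^d\big((F\cap(\partial O)_{\varepsilon/r_j})_{\varepsilon/r_j}\big)$, and likewise $\lambda^d(F_\varepsilon\setminus O)\le\lambda^d\big((F\cap(\partial O)_\varepsilon)_\varepsilon\big)$.
\item Since $F\cap O\neq\emptyset$, there is a word $w$ with $\delta_0:=\dist(S_wF,\partial O)>0$. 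Since $S_{u_1}O\subseteq O$, one gets $\dist(S_{u_1}S_wF,\partial O)\ge r_{u_1}\delta_0$ for every word $u_1$. So a cylinder $S_uF$ with $r_u\approx\rho$ that meets $(\partial O)_\rho$ can contain $w$ only in a suffix of bounded length.
\item The block estimate $\sum_{|u|=n,\ u\text{ avoids }w}r_u^D\le(1-r_w^D)^{\lfloor n/|w|\rfloor}$, together with $r_u^s\le r_{\min}^{-|u|(D-s)}r_u^D$, gives $\sum_{u\text{ avoids }w}r_u^s<\infty$ for some $s<D$. Hence there are only $O(\rho^{-s})$ such cylinders, and both bounds follow with $\delta=D-s$.
\end{itemize}
What has to be controlled is $F\cap(\partial O)_\rho$, not merely $F\cap\partial O$. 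This direct cylinder count is therefore safer than appealing to a dimension bound for $F\cap\partial O$.
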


\begin{theorem}
    [Falconer \cite{Falconer:Minkowski} and Kombrink, Winter \cite{ReellerFall:neu}] \label{thm: R^1_lattice_folgt_nicht-M-mb} Let $F \in \mathcal{H}(\RR)$ be the attractor of a lattice (OSC) self-similar system with $\dim_{\mathcal{M}}F \lee 1$. Then $F$ is not Minkowski measurable.
\end{theorem}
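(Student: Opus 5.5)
The plan is to reduce the statement to an explicit expression for $\lambda^1(F_\varepsilon)$ via the gap structure of $F$, and then apply the lattice renewal theorem. Let $S=\{S_1,\dots,S_N\}$ have ratios $r_i=\ee^{-a k_i}$ with $k_i\in\NN$ and $\gcd$ of the $k_i$ equal to $1$, where $\ee^a$ is the base. Let $D:=\dim_{\mathcal M}F<1$ be the solution of $\sum_i r_i^D=1$. Write $[\alpha,\beta]=\conv(F)$. The complement $[\alpha,\beta]\setminus F$ is a countable union of open intervals (gaps), so
\[
\lambda^1(F_\varepsilon)=(\beta-\alpha)+2\varepsilon-\sum_{G\ \text{gap}}\bigl(|G|-2\varepsilon\bigr)^+ .
\]
This formula holds for every compact subset of $\RR$.

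The first step is to organize the gaps self-similarly. If the open interval $(\alpha,\beta)$ is a feasible open set, then $\Gamma=(\alpha,\beta)\setminus\mathbf{S}(\alpha,\beta)$ consists of finitely many first-level gaps with lengths $g_1,\dots,g_m$, and $m\ge1$ because $F$ is non-trivial, i.e.\ $D<1$. Every gap is then $S_w(G_j)$ for a unique finite word $w$, with length $r_w g_j$. In general $(\alpha,\beta)$ need not be feasible, and here I would follow Kombrink--Winter: by \cite{Schief} and the projection condition, choose a strong feasible open set $O$. Then decompose $[\alpha,\beta]\setminus F$ into finitely many ``primary'' gap families, each generated by words, plus a remainder whose contribution to $\lambda^1(F_\varepsilon)$ is $o(\varepsilon^{1-D})$. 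I expect this step to be the main obstacle, since without the convex hull being feasible, gaps of different cylinders can merge.

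Next, set $\varepsilon=\ee^{-t}$ and
\[
f(t):=\ee^{-t(D-1)}\sum_{w}\sum_j\bigl(r_w g_j-2\ee^{-t}\bigr)^+ .
\]
This function satisfies a lattice renewal equation $f(t)=\sum_i r_i^D f(t+\ln r_i)+z(t)$, where $z$ is directly Riemann integrable and built from the $g_j$. By the lattice renewal theorem, $\varepsilon^{D-1}\lambda^1(F_\varepsilon)$ is asymptotic to an $a$-periodic function of $-\ln\varepsilon$, namely
\[
p(t)=C\sum_{n\in\ZZ}\ee^{-D(t+an)}\,\tilde z(t+an),
\]
with explicit $C>0$. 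Equivalently, one can count words directly: the number of words with $r_w=\ee^{-an}$ has generating function $1/(1-\sum_i x^{k_i})$, with dominant growth $\ee^{anD}$. Minkowski measurability would force $p$ to be constant.

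To show $p$ is not constant, I evaluate $p$ on a period interval between consecutive kink points. The kinks occur where $2\varepsilon$ equals some $\ee^{-an}g_j$, so there are only finitely many per period. On such an interval the set of ``big'' gaps (those with $r_wg_j>2\varepsilon$) has a fixed structure modulo scaling. Summing the geometric series gives
\[
p=c_1\varepsilon^{D-1}+c_2\varepsilon^{D}.
\]
Here $c_2$ is proportional to $2$ minus twice the (normalized) number of big gaps, and this coefficient does not vanish. Since $\varepsilon^{D-1}$ and $\varepsilon^D$ are linearly independent on any interval, $p$ is constant there only if $c_1=c_2=0$. That would give $p\equiv0$, contradicting $\lambda^1(F_\varepsilon)\ge2\varepsilon$ together with the positivity of the lower Minkowski content for (OSC) sets. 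Alternatively, one can compare one-sided derivatives at a kink, where the slope jumps by a positive multiple of $\ee^{-an}$-weighted gap counts. Hence $\mathcal M(F)$ does not exist, and $F$ is not Minkowski measurable.
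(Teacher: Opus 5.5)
The paper gives no proof of this theorem; it only quotes Falconer and Kombrink--Winter, so your proposal has to stand on its own. What you establish, up to the slips noted below, is the case where $(\alpha,\beta)=\interior(\conv F)$ is itself a feasible open set. There every gap is $S_w(G_j)$ for a unique word, all gap lengths lie in the finitely many cosets $g_j\ee^{-a\ZZ}$, and your kink argument works.

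\textbf{The gap.} The general (OSC) case, where the first-level hulls $S_i[\alpha,\beta]$ overlap, is exactly what needed the work of Kombrink and Winter. There you only assert a decomposition into ``finitely many primary gap families generated by words plus an $o(\varepsilon^{1-D})$ remainder'', without proof.
\begin{itemize}
\item A gap inside $S_i[\alpha,\beta]\cap S_j[\alpha,\beta]$ is typically bounded by a point of $S_iF$ and a point of $S_jF$. It is then not the image under a single $S_w$ of one of finitely many fixed gaps. Also, gaps of $S_w(F)$ can be partly filled by other cylinders.
\item Your renewal equation with directly Riemann integrable inhomogeneity needs the gap lengths to fall into finitely many lattice cosets, or some finite-type, graph-directed description of the gaps. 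So does the claim of finitely many kinks per period. Nothing in your sketch provides this.
\item There is no reason the leftover gaps should contribute only $o(\varepsilon^{1-D})$: any infinite gap family that reproduces under the maps contributes at full order $\varepsilon^{1-D}$.
\item The strong feasible open set with projection condition that you choose is never used afterwards. That route leads to analysing $\varepsilon\mapsto\lambda^1(F_\varepsilon\cap\Gamma_O)$, not to a gap decomposition of $[\alpha,\beta]\setminus F$.
\end{itemize}

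\textbf{Slips in the special case.} The renewal step is wrong as written. Since $\sum_{w,j}r_wg_j=\beta-\alpha$, your $f(t)$ grows like $(\beta-\alpha)\ee^{t(1-D)}$. The inhomogeneity of your equation is $z(t)=\ee^{t(1-D)}\sum_j(g_j-2\ee^{-t})^+$, which grows like $\ee^{t(1-D)}\sum_j g_j$ and is not directly Riemann integrable. Work instead with $\lambda^1(F_\varepsilon)=2\varepsilon+\sum_G\min(|G|,2\varepsilon)$, which uses $\lambda^1(F)=0$. Then $z(t)=\ee^{t(1-D)}\sum_j\min(g_j,2\ee^{-t})$ decays exponentially at both ends.

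There is also a sign error. Let $\nu(x):=\#\{G:|G|>x\}$. Between kinks, $\lambda^1(F_\varepsilon)=2(1+\nu(2\varepsilon))\varepsilon+T$, where $T$ is the total length of the remaining gaps. So the $\varepsilon^D$-coefficient is $2+2\nu(2\varepsilon)$, not ``$2$ minus twice'' the count. In the periodic limit, $c_1$ and $c_2$ are sums of positive terms, and that is the real reason they do not vanish.

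\textbf{Renewal theory is not needed once the cosets are known.} Suppose the gap lengths lie in $m$ cosets $g_j\ee^{-a\ZZ}$. By the Lapidus--Pomerance criterion, for $0<D<1$ measurability is equivalent to $x^D\nu(x)$ having a positive finite limit as $x\to0$. In every period, $\nu$ is constant on some interval of logarithmic length at least $a/m$. This forces $x^D\nu(x)$ to oscillate by a factor of at least $\ee^{aD/m}$, which rules out a limit. So the whole difficulty sits in the step you left open.
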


Taken together, Theorems \ref{thm: nonlattice_folgt_M-mb} and \ref{thm: R^1_lattice_folgt_nicht-M-mb} provide a full characterization of Minkowski measurability for self-similar sets in $\RR$ generated by an (OSC) IFS $S$. If the Minkowski dimension is below $1$, Minkowski measurability holds if and only if $S$ satisfies the non-lattice condition. It is currently unknown whether this characterization extends to higher-dimensional settings. A partial step toward this question is given in Theorem \ref{thm: pluriphase-case}.

\begin{theorem}[Kombrink, Pearse, Winter, \cite{Pluriphase}] \label{thm: pluriphase-case}
Let $F \in \mathcal{H}(\RR^d)$ be the attractor of an lattice (OSC) self-similar system with $\dim_{\mathcal{M}} F \not \in \NN$. Suppose that there exists a strong feasible open set $O$ satisfying the projection condition such that $F$ is pluriphase with respect to $\Gamma_{O}$. Then $F$ is not Minkowski measurable.
\end{theorem}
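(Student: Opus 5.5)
The approach I would take is the renewal-theoretic one: reduce Minkowski measurability to a lattice renewal equation, then show that the resulting periodic limit cannot be constant.

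\emph{Step 1: the renewal equation.} Let $D:=\dim_{\mathcal M}F$ and let $O$ be the given strong feasible open set with the projection condition. Set $\varphi(\varepsilon):=\lambda^d(F_\varepsilon\cap O)$. The projection condition gives $F_\varepsilon\cap S_iO=S_i(F_{\varepsilon/r_i}\cap O)$, because points of $S_iO$ are nearest to $S_iF$. Decomposing $O=\Gamma\,\dot\cup\,\bigcup_i S_iO$ up to a null set then yields
\[
\varphi(\varepsilon)=\sum_{i=1}^N r_i^d\,\varphi(\varepsilon/r_i)+\lambda^d(F_\varepsilon\cap\Gamma).
\]
Substitute $t=-\ln\varepsilon$ and $Z(t):=e^{-(d-D)t}\varphi(e^{-t})$. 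Since $\sum r_i^D=1$, this becomes a renewal equation $Z=Z*\mu+z$, where $\mu$ is the probability measure with atoms $r_i^D$ at $-\ln r_i$ and $z(t)=e^{-(d-D)t}\lambda^d(F_{e^{-t}}\cap\Gamma)$.

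\emph{Step 2: the lattice renewal theorem.} By the pluriphase hypothesis, $z$ is piecewise smooth with finitely many pieces and decays exponentially as $t\to\infty$, so it is directly Riemann integrable. The lattice renewal theorem, with $\mu$ supported on $a\NN$, then gives $Z(t)-p(t)\to 0$, where
\[
p(t)=\frac{a}{\eta}\sum_{k\in\ZZ}z(t+ka)
\]
is $a$-periodic and $\eta$ is the mean of $\mu$. Thus $\varepsilon^{D-d}\varphi(\varepsilon)$ converges if and only if $p$ is constant. I would then invoke the known comparison (Winter's results under SOSC and the projection condition) to see that the outer part $\lambda^d(F_\varepsilon\setminus O)$ cannot compensate the oscillation. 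Concretely, I would show that $\varepsilon^{D-d}\lambda^d(F_\varepsilon\setminus O)$ likewise has a periodic asymptotic obtained from the same renewal structure applied to the outer part, and that Minkowski measurability of $F$ is equivalent to constancy of the combined periodic function. If that equivalence is already in the cited literature, I would simply quote it.

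\emph{Step 3: Fourier coefficients of $p$.} The $n$-th coefficient is, up to a constant,
\[
\hat p(n)=\int_{\RR}z(t)e^{-2\pi i n t/a}\,dt=\int_0^\infty \varepsilon^{D-d-1+2\pi i n/a}\,\lambda^d(F_\varepsilon\cap\Gamma)\,d\varepsilon,
\]
a Mellin transform evaluated at $s_n=D+2\pi i n/a$. The integral over $(g,\infty)$ is handled by analytic continuation, since $\lambda^d(\Gamma)$ is constant there and $D<d$. With the pluriphase form, this transform is explicit:
\[
\hat p(n)=\sum_{m,k}\kappa_{m,k}\,\frac{a_m^{s_n-k}-a_{m-1}^{s_n-k}}{s_n-k}-\lambda^d(\Gamma)\,\frac{g^{s_n-d}}{s_n-d}.
\]
It remains to show that $\hat p(n)\neq0$ for some $n\neq0$.

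\emph{Main obstacle.} The hardest step is proving that these finitely many terms cannot vanish simultaneously for all $n\neq 0$. My first attempt would treat $G(s)=\sum_j c_j\,b_j^{s}/(s-k_j)$ as an entire-type function, a finite exponential sum with rational coefficients, after clearing the poles at the integers $k_j$. These poles are distinct from the points $s_n$ because $D\notin\NN$. If $G(s_n)=0$ for all $n\ne0$, then $G$ restricted to the vertical line $\operatorname{Re}s=D$ is an almost periodic function times rational factors vanishing on an arithmetic progression. Grouping the $b_j^{s}$ by their classes modulo $e^{a\ZZ}$ reduces this to finitely many trigonometric polynomials in $e^{2\pi i n\cdot(\text{phase})}$ with coefficients rational in $n$. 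Letting $n\to\infty$ along residue classes should force every grouped coefficient to vanish, and hence force $z$ to be $a$-periodic-free in a way that contradicts either $\lambda^d(\Gamma)>0$ (non-triviality) or $D\notin\NN$. Specifically, a nonconstant term $\varepsilon^{d-k}$ with $d-k\ne d-D$ cannot be cancelled. This is exactly where the hypothesis $D\notin\NN$ should be essential.
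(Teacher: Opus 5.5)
The paper gives no proof of this theorem (it is quoted from Kombrink, Pearse and Winter), so I judge your outline on its own merits. The renewal-theoretic reduction is the right framework, but the argument stops at the decisive point: you never prove that the periodic limit $p$ is non-constant.

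Your linear-independence step shows that for every class $\ell$ the rational function $R_\ell(s)=\sum_{j\in\ell}c_jb_j^{D}/(s-k_j)$ vanishes identically. (The phases $\ln b_j/a \bmod 1$ need not be rational, so ``residue classes'' is the wrong picture. Use instead that the sequences $n\mapsto n^ie^{2\pi i n\theta}$ for distinct $\theta \bmod 1$ are linearly independent.) But $R_\ell\equiv 0$ for all $\ell$ is only a restatement of ``$\hat p(n)=0$ for all $n\neq 0$'', i.e.\ of $p$ being constant. Your remark that a term $\varepsilon^{d-k}$ ``cannot be cancelled'' does not produce a contradiction.

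You have also misplaced the role of $D\notin\NN$. For $n\neq0$ the points $s_n$ are non-real, so they are never poles, whatever $D$ is. The hypothesis enters at $n=0$. There $b_j^{s_0}=b_j^{D}$, and since $D\notin\NN$, $s_0=D$ is not among the poles $k_j$. Hence $G(s_0)=\sum_\ell R_\ell(D)=0$. But $G(s_0)=\int_0^\infty\varepsilon^{D-d-1}\lambda^d(F_\varepsilon\cap\Gamma)\,d\varepsilon$ equals $\eta$ times the mean of $p$, and it is strictly positive. Indeed $D<d$ forces non-triviality, so $O\setminus\overline{\mathbf{S}O}$ is a non-empty open subset of $\Gamma$ and $\lambda^d(\Gamma)>0$. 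For integer $D$ the pole at $s_0$ cancels but leaves a logarithmic term $\sum_{k_j=D}c_jb_j^{D}\ln b_j$ that the class conditions do not control, which is exactly why the argument needs $D\notin\NN$.

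Equivalently, and without Fourier analysis: put $\rho=e^{-a}$ and $\psi(\varepsilon)=\lambda^d(F_\varepsilon\cap\Gamma)$. The limit is $\tilde p(\varepsilon)=\frac{a}{\eta}\sum_{j\in\ZZ}(\rho^j\varepsilon)^{D-d}\psi(\rho^j\varepsilon)>0$. On any interval avoiding all points $\rho^{-j}a_m$, all but finitely many summands form geometric tails, so $\tilde p=\sum_{k=0}^dC_k\varepsilon^{D-k}$ there. Since the exponents $0,D,D-1,\dots,D-d$ are distinct, this cannot equal a positive constant.

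Three smaller repairs are also needed.
\begin{enumerate}
\item In Step 1 the normalisation must be $Z(t)=e^{(d-D)t}\varphi(e^{-t})$, and likewise for $z$. With your sign the weights are $r_i^{2d-D}$, not $r_i^D$; your Step 3 formula already uses the correct sign.
\item The exponential decay of $z$ as $t\to\infty$ is not automatic from the pluriphase form. On $(0,a_1]$ one has $z(t)=\sum_k\kappa_{1,k}e^{(k-D)t}$, so you need $\kappa_{1,k}=0$ for $k>D$. This follows from $\lambda^d(F_\varepsilon\cap\Gamma)\le\lambda^d(F_\varepsilon)=O(\varepsilon^{d-D})$ together with $D\notin\NN$. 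The same fact justifies dropping the $a_0^{s_n-k}$ terms in your formula for $\hat p(n)$.
\item Do not look for a renewal equation for $F_\varepsilon\setminus O$. The fact you need is that for a strong feasible open set $\lambda^d(F_\varepsilon\setminus O)=o(\varepsilon^{d-D})$; this is where the SOSC, not just the OSC, is used. With it, Minkowski measurability is equivalent to constancy of $p$ alone, rather than of an unanalysed ``combined'' periodic function.
\end{enumerate}
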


\begin{remark}
Note that it is essential to exclude sets with an integer Minkowski dimension from the Lapidus conjecture. This limitation can be illustrated by a continuous family of one-dimensional counterexamples. For any parameter $r \in (0,1)$, consider the self-similar system $S^{(r)}$ on $\mathbb{R}$ defined by
\[
S_1^{(r)}(x) = r x, \quad S_2^{(r)}(x) = (1-r)x + r.
\]
The attractor is invariably the unit interval $F = [0,1]$ and trivially Minkowski measurable (and pluriphase) for all $r$. However, the system is lattice if and only if $\log(r)/\log(1-r) \in \mathbb{Q}$ (e.g., when $r=1/2$). For all other choices (e.g., $r=1/3$), the system is non-lattice.
Similarly, a simple two-dimensional counterexample is given by the (OSC) self-similar system $S$ on $\mathbb{R}^2$ defined by
\[
S_1(x) = \tfrac12\,x, \quad
S_2(x) = \tfrac12\,x + (\tfrac12,0), \quad
S_3(x) = \tfrac12\,x + (0,\tfrac12), \quad
S_4(x) = \tfrac12\,x + (\tfrac12,\tfrac12).
\]
The attractor of $S$ is the unit square $F=[0,1]^2$. Clearly, $F$ is Minkowski measurable (and pluriphase), but the underlying IFS $S$ is lattice.
\end{remark}

\begin{figure}

    \centering
    \begin{tikzpicture}[%
    block/.style={
      draw,
      rectangle,
      rounded corners=5pt,
      fill=orange!20,
      minimum width=2.5cm,
      minimum height=4cm,
      align=center,
      drop shadow={shadow xshift=1pt,shadow yshift=-1pt,opacity=0.4}
    },
    block2/.style={
      draw,
      rectangle,
      rounded corners=5pt,
      fill=teal!20,
      minimum width=2.5cm,
      minimum height=4cm,
      align=center,
      drop shadow={shadow xshift=1pt,shadow yshift=-1pt,opacity=0.4}
    },
    arrow/.style={
      double distance=2pt,
      -{Latex[length=3mm]},
      thick,
      draw=black!50!green,
      shorten >=3pt,
      shorten <=3pt
    },
     arrow2/.style={
      double distance=2pt,
      -{Latex[length=3mm]},
      thick,
      draw=black!20!blue,
      shorten >=3pt,
      shorten <=3pt
    }
  ]

  \node[block]  (A) at (-6,0) {non-\\lattice};
  \node[block] (B) at (6,0)  {Minkowski\\measurable};

  \draw[arrow]
    ([yshift=45pt]A.east) -- ([yshift=45pt]B.west)
    node[midway, above, font=\small\itshape] {$\mathbb{R}^{\ge1}$}
    node[midway, below, font=\small\itshape] {\emph{Gatzouras 2000}};

  \draw[arrow2]
    ([yshift=0pt]B.west) -- ([yshift=0pt]A.east)
    node[midway, above, font=\small\itshape] {$\mathbb{R}^1,\ \dim_{\mathcal M}<1$}
    node[midway, below, font=\small\itshape] {\emph{Lapidus 1993, Falconer 1995, Kombrink and Winter 2020}};

  \draw[arrow2]
    ([yshift=-45pt]B.west) -- ([yshift=-45pt]A.east)
    node[midway, above, font=\small\itshape] {$\mathbb{R}^{\ge2},\,\dim_{\mathcal M}\notin\mathbb N$, \emph{pluriphase\,w.r.t.}\,\(\mathcal O\)}
    node[midway, below, font=\small\itshape] {\emph{Kombrink, Pearse and Winter 2016}};

\end{tikzpicture}
    \vspace{0.3cm}
    \caption{Overview of the relationships between non-lattice and Minkowski measurability of self-similar sets satisfying the (OSC), see Figure 1 in \cite{freiberg2026family}.} 
    \label{fig:Schaubild}
\end{figure}

\section{n-flake dusts}
\label{sec:n-flake}

Let $n \in \NN$ with $n \geq 3$ and $r \in (0,1)$. For a regular $n$-gon with vertices $V_1, \dots, V_n \in \mathbb{R}^2$, 
an IFS is given by the set of $n$ contractive similarity mappings $\mathcal{S}:=\mathcal{S}^{n,r}:=\{\mathcal{S}_1, \dots, \mathcal{S}_n\}$: \[
\mathcal{S}_i(x):= r x + (1 - r)V_i, \quad \text{for } i =  1, \dots, n.
\]

Note, the contraction ratio $ r$ required for the sub-polygons to 
just touch is:

\begin{equation}
\label{eq:scaling_ratio}
   r = \frac{1}{2} \left(1+\displaystyle \sum_{k=1}^{\lfloor n/4 \rfloor} \cos\frac{2\pi k}{n} \right)^{-1} =: R_n, 
\end{equation}

see for example \cite{SchlickerDennisNGons}. The attractor of $\mathcal{S}$ is called \emph{$n$-flake} or \emph{Sierpinski $n$-gon}. If $r < R_n$, then the attractor of $\mathcal{S}$ is specified as \emph{$n$-flake dust} and denoted by $\mathcal{F}:=\mathcal{F}^{n,r}$. 

Since $r < R_n$, the IFS $\mathcal{S}$ satisfies (OSC), with the feasible open set $$\OO := \OO_n := \interior (\conv (\{V_1, \ldots ,V_n \})). $$  Indeed, even the
strong separation condition, i.e
\(S_i(\mathcal{F})\cap S_j(\mathcal{F})=\emptyset\) for all \(i\neq j\) holds. Therefore, by the Hutchinson dimension formula \cite{Hut}, $\mathcal{F}^{n,r}$ has Minkowski (and Hausdorff) dimension $$D:=D^{n,r}:=\dim_{\mathcal{M}}(\mathcal{F})= - \frac{\ln n}{\ln r} \in (0,2).$$ Thus $\mathcal{F}^{n,r}$ is nontrivial for all $n \geq 3$ and all $r \in (0,R_n)$. Further, $\mathcal{F}^{n,r}$ is lattice with base $r$ for all $n \geq 3$ and all $r \in (0,R_n)$. 

\begin{figure}[htbp]
    \centering
    \begin{subfigure}[b]{0.23\linewidth}
        \centering
        \includegraphics[width=\linewidth]{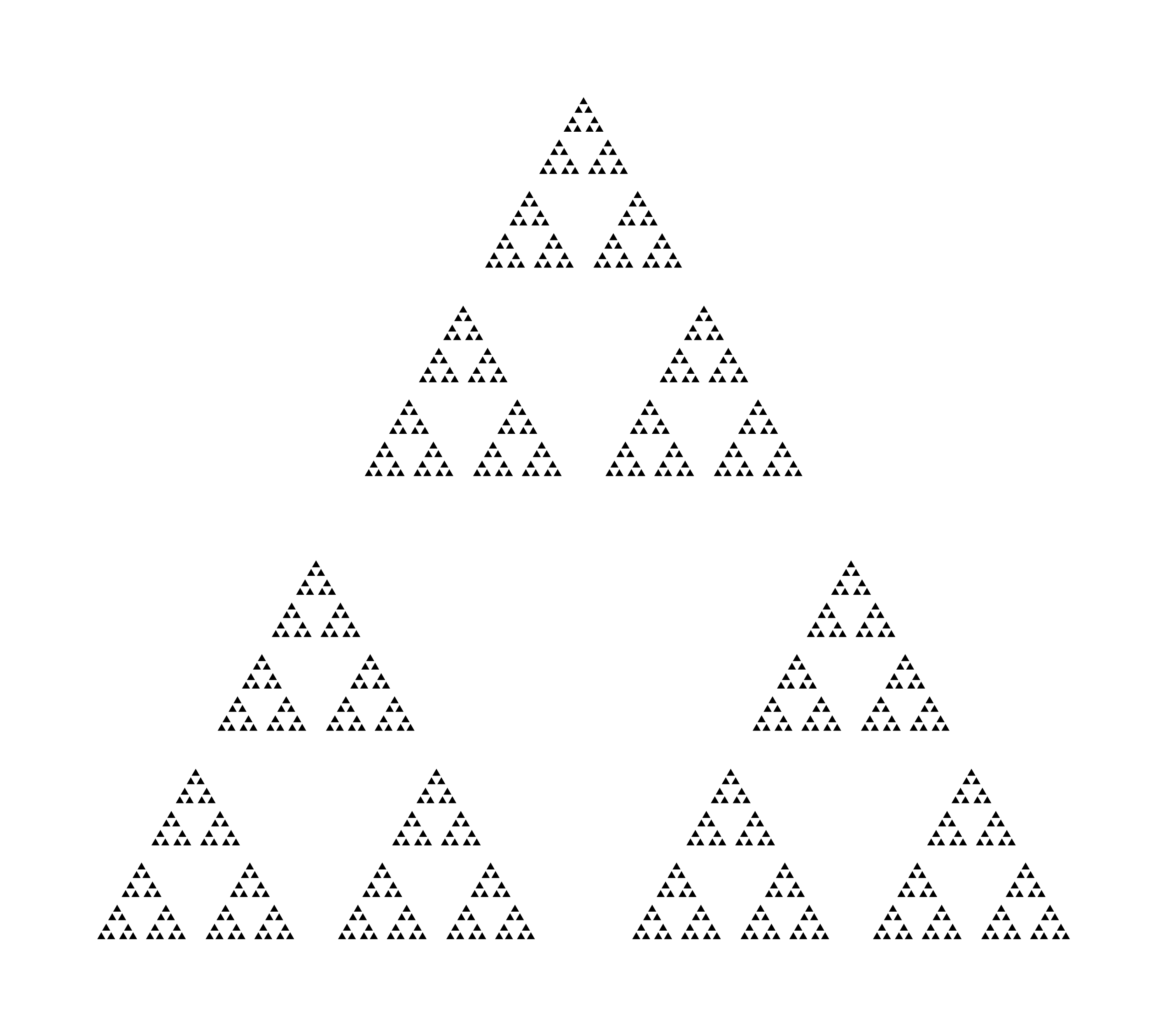}
        \caption{$n=3, r=0.45$}
        \label{fig:bild1}
    \end{subfigure}
    \hfill
    \begin{subfigure}[b]{0.23\linewidth}
        \centering
        \includegraphics[width=\linewidth]{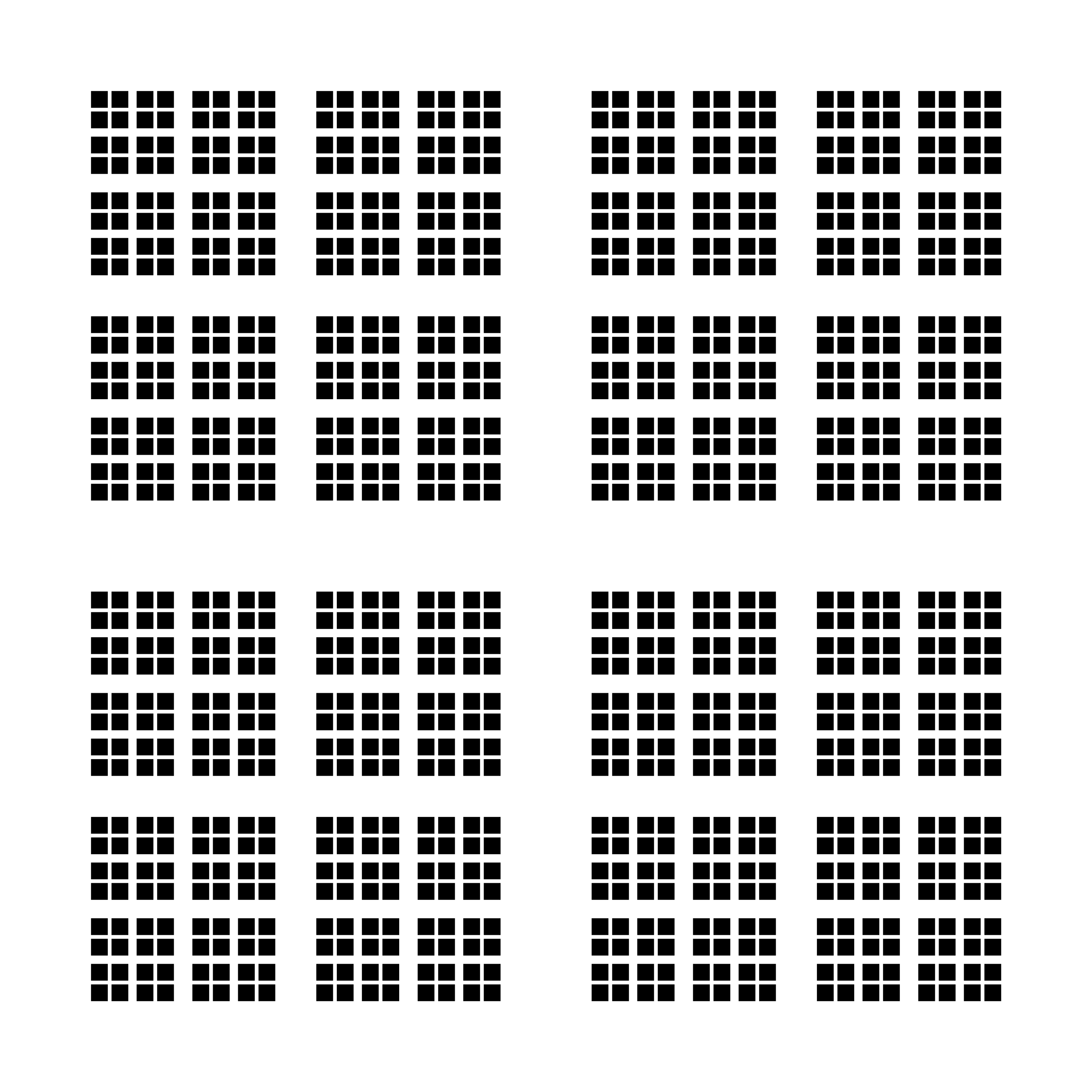}
        \caption{$n=4, r=0.45$}
        \label{fig:bild2}
    \end{subfigure}
    \hfill
    \begin{subfigure}[b]{0.23\linewidth}
        \centering
        \includegraphics[width=\linewidth]{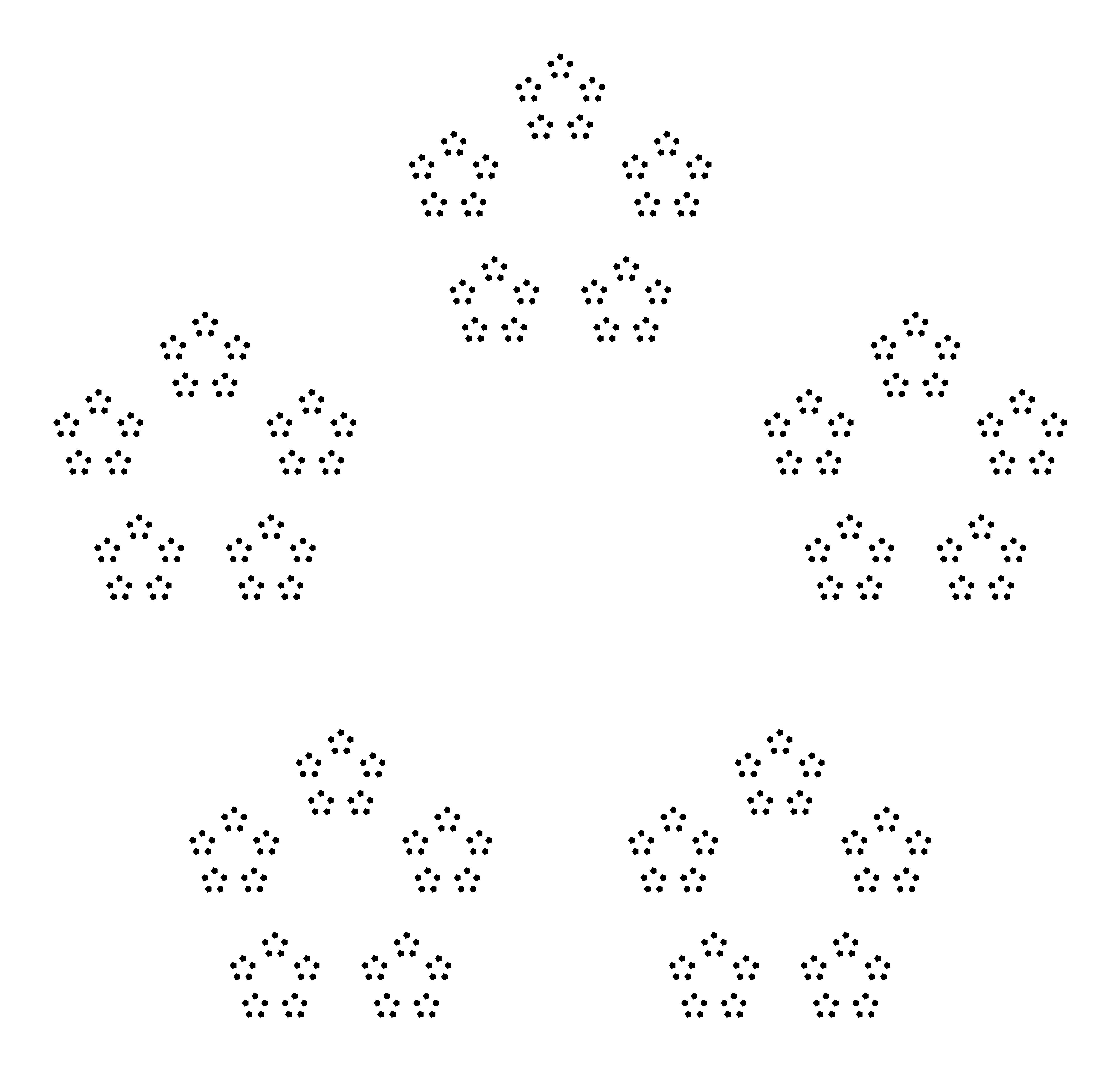}
        \caption{$n=5, r=0.3$}
        \label{fig:bild3}
    \end{subfigure}
    \hfill
    \begin{subfigure}[b]{0.23\linewidth}
        \centering
        \includegraphics[width=\linewidth]{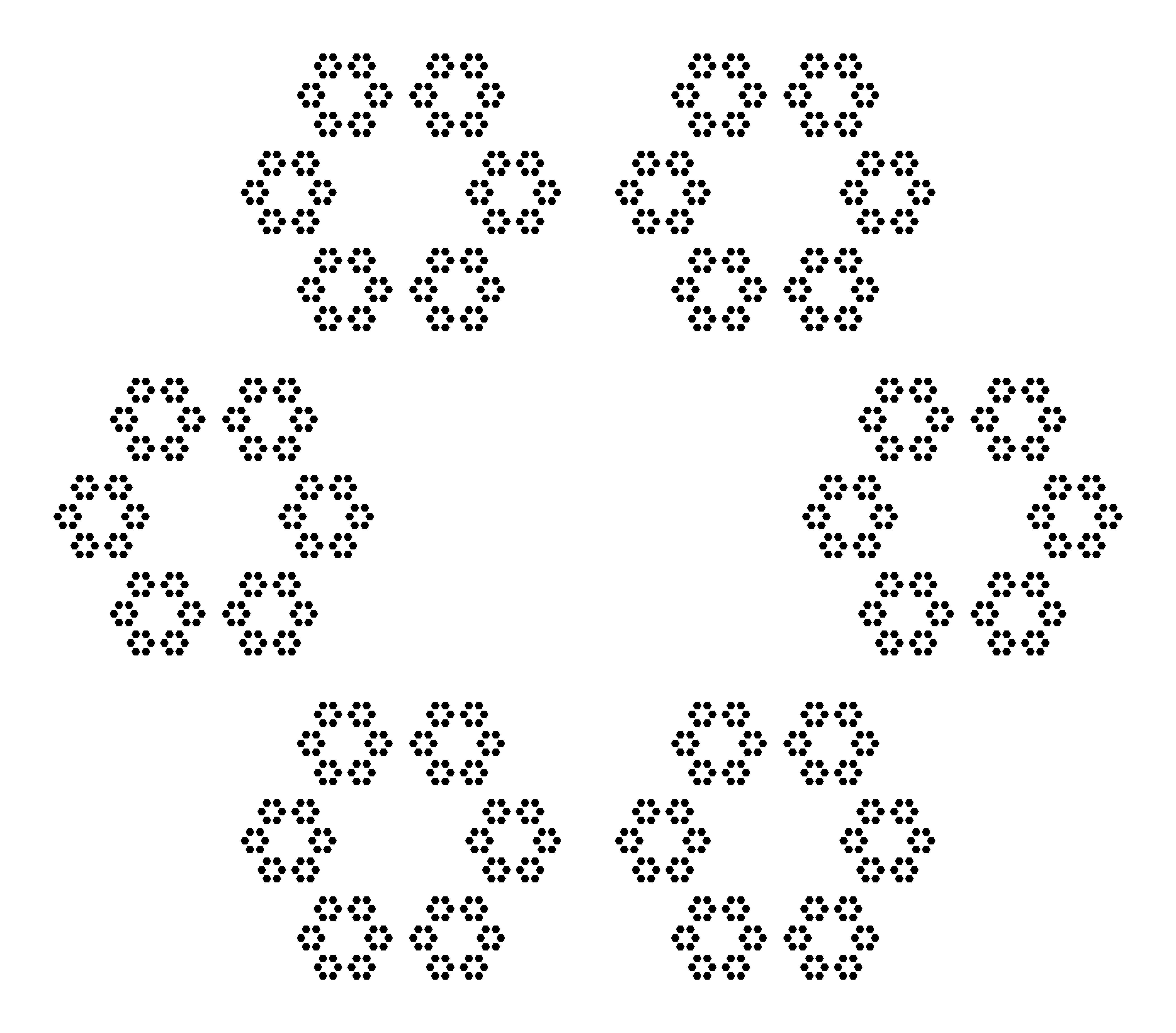} 
        \caption{$n=6, r=0.3$}
        \label{fig:bild4}
    \end{subfigure}

   \caption{Visualization of the families $\mathcal{F}^{n,r}$ for various values of $n$ and $r$.}
    \label{fig:vier_nebeneinander}
\end{figure}

\begin{remark}
For $n=4$ and $r<R_4=1/2$ we have $\mathcal{F}^{n,r}= C^{1/r}$ the Cantor dust set discussed in \cite{freiberg2026family}. Therein,  it is shown that $\mathcal{F}^{4,r}$ is not Minkowski measurable for $r< 1/30.$ For $1/2 > r > 1/30$ it is only conjectured that $C^{1/r}$ is not Minkowski measurable. In \cite{freiberg2026family} it is further shown that, for all $r \in (0, 1/2)$ $\mathcal{F}^{4,r}$ is  not pluriphase  with respect to $\Gamma_{\OO}$ for the IFS $\mathcal{S}^{4,r}$ and the feasible set $\OO =\mathcal{O}_4$. Consequently, Theorem \ref{thm: pluriphase-case} is not readily applicable to proof that $\mathcal{F}^{n,r}$ is not Minkowski measurable. 
\end{remark}

\section{Main Result}
\label{sec:main-result}

In this section we study the Minkowski measurability of the sets $\mathcal{F}=\mathcal{F}^{n,r}$ for any $n\in \NN$. The main result is stated in Theorem~\ref{thm:mainThm}: for each $n \geq 3$, a family of $n$-flake dusts $\mathcal{F}$ is not Minkowski measurable.

Let $a > 0$. A function $h: (0, \infty) \to \mathbb{R}$ is said to be \emph{log-periodic} on $(0,a]$ with scaling factor $\rho \in (0,1)$, if it satisfies the equation $$h(\rho x) = h(x) \quad \forall x \in (0, a].$$ For $\mathcal{F}$ consider the function $f \colon (0, \infty) \to [0, \infty )$
\[
f(\varepsilon) := \frac{\lambda^2(\mathcal{F}_\varepsilon)}{\varepsilon^{2-D}} .
\]

If $f$ is log-periodic on $(0,a]$ for $a>0$, then $\mathcal{F}$ is Minkowski measurable if and only if $f$ is constant. Thus, to show that $\mathcal{F}$ is not Minkowski measurable it suffices to prove that 
\begin{enumerate}
    \item[1)] $f$ is log-periodic for sufficiently small $\varepsilon$ and 
    \item[2)] $f$ is not constant.
\end{enumerate}  
  For the latter, it is enough to show that $f$ is not $C^2$ at some point $\epsilon_0 < a$. By the following Lemma~\ref{lemma:quotient_ist_nicht_konst}, this reduces to showing that the function $\varepsilon \mapsto \lambda^2(\mathcal{F}_\varepsilon)$ is not $C^2$ at $\epsilon_0$, where  $C^k$ ($k\in \NN$) denotes the $k$-times continuously differentiable functions.

\begin{lemma} \label{lemma:quotient_ist_nicht_konst}
    Let $u$ be a $C^k$ function that is not $C^{k+1}$ for some $k \in \NN$, and let $v$ be a $C^{k+1}$ function that is nowhere zero. Then their quotient $u/v$ cannot be constant.
\end{lemma}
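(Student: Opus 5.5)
Assume, to get a contradiction, that $u/v \equiv c$ on the common domain for some constant $c\in\RR$. The idea is simple: then $u = c\,v$, and the right-hand side is as smooth as $v$ is, which clashes with the hypothesis on $u$. In the paper's application, $u(\varepsilon)=\lambda^2(\mathcal{F}_\varepsilon)$ and $v(\varepsilon)=\varepsilon^{2-D}$. This $v$ is $C^\infty$ and nowhere zero on $(0,\infty)$, so the lemma applies directly.

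First, since $v$ is nowhere zero, the quotient $u/v$ is defined everywhere on the domain. The identity $u/v=c$ therefore gives $u=c\,v$ pointwise. Second, the function $c\,v$ is a constant multiple of a $C^{k+1}$ function, so it is itself $C^{k+1}$: its derivatives up to order $k+1$ are $c\,v^{(j)}$, and these are continuous. Hence $u$ is $C^{k+1}$, contradicting the assumption that $u$ is not $C^{k+1}$. So $u/v$ cannot be constant.

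The only point needing care is that smoothness is used locally. In Section~\ref{sec:main-result} the relevant failure is that $u$ is not $C^{k+1}$ in a neighbourhood of a specific point $\varepsilon_0$. I would therefore run the same argument on an open interval $I\ni\varepsilon_0$ on which $u/v$ is assumed constant. On $I$, the equality $u=c\,v$ forces $u$ to be $C^{k+1}$ near $\varepsilon_0$, which is again a contradiction.

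There is no real obstacle here. The lemma is essentially the remark that multiplying by a nonvanishing smooth function preserves the exact order of differentiability. The hypothesis that $u$ is $C^k$ is not even needed for the contradiction; it only records where the regularity breaks.
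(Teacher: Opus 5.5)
Your proof is correct and uses the same argument as the paper: assuming $u/v\equiv c$ gives $u=c\,v\in C^{k+1}$, which contradicts the hypothesis on $u$. Your remark about running the argument on an open interval around the point where regularity fails is a sensible clarification, since the paper leaves this implicit when it applies the lemma at $\varepsilon_0=g$.
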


\begin{proof}
    Suppose that the quotient is constant, meaning $u/v = c$ for some constant $c \in \RR$. This implies that $u = c \cdot v$. Since $v \in C^{k+1}$ it follows that $u \in C^{k+1}$. This directly contradicts the assumption that $u$ is not $C^{k+1}$.
\end{proof}



To proof that the function $\varepsilon \mapsto \lambda^2(F_\varepsilon)$ is not $C^2$ we recall the following well known result:

\begin{theorem}[{\cite[Corollary 2.5]{WinterRataiSurfaceArea}}, based on \cite{stacho} and \cite{HugLastWeil}] \label{Thm: Stracho}
   Let $A\in \mathcal{H}(\RR^d)$ and $\epsilon \mapsto V_A(\epsilon):=\lambda^d(A_\epsilon)$ differentiable in $\epsilon_0$. Then $V_A^\prime(\epsilon_0)= \mathcal{H}^{d-1}(\partial A_{\epsilon_0})$.
\end{theorem}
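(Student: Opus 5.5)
The plan is to combine the coarea formula for the distance function $d_A$ with one-sided comparison inequalities between the boundaries $\partial A_s$ and $\partial A_t$ (Kneser/Stach\'o type). These pin the value $\mathcal{H}^{d-1}(\partial A_{\epsilon_0})$ between the one-sided derivatives of $V_A$ at $\epsilon_0$. If $V_A$ is differentiable at $\epsilon_0$, the two one-sided derivatives coincide and the claim follows.

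\emph{Step 1 (integral representation).} The function $d_A$ is $1$-Lipschitz and satisfies $\|\nabla d_A\|=1$ almost everywhere on $\RR^d\setminus A$. The coarea formula then gives, for $0<s<t$,
\[
V_A(t)-V_A(s)=\int_s^t \mathcal{H}^{d-1}(\{d_A=u\})\,du .
\]
Next I would show that $\{d_A=u\}$ and $\partial A_u$ differ by an $\mathcal{H}^{d-1}$-null set for all but countably many $u$. The difference consists of points at level $u$ lying in $\interior(A_u)$. Such points are local maxima of $d_A$, hence critical points in the sense of Fu (Remark \ref{Thm:Fu}). The set of critical values is small, by Ferry/Fu-type results on critical values of distance functions in low dimensions; alternatively one can note that the bad sets for distinct $u$ are disjoint with finite total measure. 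Consequently $V_A$ is locally absolutely continuous on $(0,\infty)$, and $V_A'(u)=\mathcal{H}^{d-1}(\partial A_u)$ for almost every $u$.

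\emph{Step 2 (comparison of level sets).} For $0<s<t$, I would prove two-sided bounds of the form
\[
\Big(\tfrac{s}{t}\Big)^{d-1}\mathcal{H}^{d-1}(\partial A_t)\ \lesssim\ \mathcal{H}^{d-1}(\partial A_s)\ \text{(up to errors vanishing as } s\to t).
\]
The tool is the map $x\mapsto \pi(x)+\tfrac{s}{t}(x-\pi(x))$, defined for $x\in\partial A_t$ with a unique nearest point $\pi(x)\in A$; non-uniqueness occurs only on an $\mathcal{H}^{d-1}$-null set. This is the map used in Stach\'o's proof of the Kneser property
\[
V_A(\lambda b)-V_A(\lambda a)\le \lambda^d\big(V_A(b)-V_A(a)\big),\qquad \lambda\ge 1 .
\]
The Kneser property already shows that the one-sided derivatives $V_A'^{\pm}(\epsilon)$ exist at every $\epsilon>0$.

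\emph{Step 3 (identification).} Averaging Step 1 over $[\epsilon_0,\epsilon_0+h]$ and over $[\epsilon_0-h,\epsilon_0]$, and using the comparison in Step 2 to replace $\mathcal{H}^{d-1}(\partial A_u)$ by $\mathcal{H}^{d-1}(\partial A_{\epsilon_0})$ up to factors $(u/\epsilon_0)^{\pm(d-1)}\to1$, I aim to obtain
\[
V_A'^{+}(\epsilon_0)\le \mathcal{H}^{d-1}(\partial A_{\epsilon_0})\le V_A'^{-}(\epsilon_0).
\]
This matches the statements of Hug--Last--Weil for the outer and inner one-sided derivatives. Differentiability at $\epsilon_0$ then forces equality throughout.

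The main obstacle is Step 2: proving the semicontinuity of $u\mapsto\mathcal{H}^{d-1}(\partial A_u)$ in the correct direction on each side of $\epsilon_0$. One difficulty is that the retraction map is only Lipschitz away from the medial axis. Another is that parts of $\partial A_{\epsilon_0}$ can vanish abruptly, namely where holes close up at the hole values. I would first try to handle this via the reach/normal-bundle description of Hug--Last--Weil, which controls both one-sided derivatives through the support measures of $A$.
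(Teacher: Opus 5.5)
The paper gives no proof of this statement. It imports it from the cited Corollary 2.5 of Rataj and Winter, which rests on Stach\'o's analysis of the volume function and on the Hug--Last--Weil local Steiner formula. That formula writes $V_A$, and hence both one-sided derivatives $V_A'^{\pm}(\epsilon)$, as integrals over the normal bundle of $A$ against support measures.

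Your route is genuinely different and more elementary. It uses only the coarea formula for $d_A$ and Stach\'o's retraction map to get the sandwich $V_A'^{+}(\epsilon_0)\le\mathcal{H}^{d-1}(\partial A_{\epsilon_0})\le V_A'^{-}(\epsilon_0)$. Read with $\limsup$/$\liminf$ of difference quotients, you do not even need the Kneser-based existence of one-sided derivatives. The approach is correct. What you give up is the explicit normal-bundle description of $V_A'^{\pm}$, which the statement does not need.

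The step you call the main obstacle is in fact easy, and the difficulty you anticipate is not there. What has to be Lipschitz is the \emph{inverse} of the retraction, and it is globally Lipschitz for any choice of nearest points. Let $0<s<t$, $d_A(y)=d_A(y')=t$, $a\in\Sigma_A(y)$, $a'\in\Sigma_A(y')$, and set $x=a+\tfrac st(y-a)$, $x'=a'+\tfrac st(y'-a')$. From $\|a'-y\|\ge t=\|a-y\|$ and its symmetric counterpart you get $\langle a-a',(y-a)-(y'-a')\rangle\ge-\|a-a'\|^2$. Expanding the squares then gives
\[
\|x-x'\|^2-\big(\tfrac st\big)^2\|y-y'\|^2\ \ge\ \big(1-\tfrac st\big)^2\|a-a'\|^2\ \ge\ 0 .
\]
Since $d_A(z)=\|z-a\|$ along the segment $[a,y]$, also $x\in\partial A_s$. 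Hence $\mathcal{H}^{d-1}(\{d_A=t\})\le (t/s)^{d-1}\mathcal{H}^{d-1}(\partial A_s)$ holds exactly. There are no error terms, and you need not prove that points of $\partial A_t$ with several nearest points form an $\mathcal{H}^{d-1}$-null set. Closing holes are harmless, since they only increase $V_A'^{-}$.

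You can also skip the identification in Step 1. On the right of $\epsilon_0$, apply the inequality to $\{d_A=u\}$ directly rather than to $\partial A_u$. On the left, use only $\partial A_u\subseteq\{d_A=u\}$. Then all you need is $\lambda^d(\{s<d_A\le t\})=\int_s^t\mathcal{H}^{d-1}(\{d_A=u\})\,du$.

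This is worth doing, because your justification of that identification is inadequate as written. Disjointness of the sets $\{d_A=u\}\cap\interior(A_u)$ plus finite total volume only shows that their $\mathcal{H}^{d-1}$-measure is finite for a.e.\ $u$. The Ferry--Fu critical-value argument is limited to low dimensions. The correct reason is that these sets consist of local maxima of $d_A$. At such points $d_A$ cannot be differentiable, since its gradient has norm $1$ wherever it exists off $A$. So their union is Lebesgue-null by Rademacher, and the coarea formula makes them $\mathcal{H}^{d-1}$-null for a.e.\ $u$.
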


Intuitively, the key observation is that the function $\varepsilon \mapsto \lambda^2(\mathcal{F}_\varepsilon)$ is no longer $\mathcal{C}^2$ at the value $\epsilon_0 := \sup \{ \epsilon > 0 : \mathcal{F}_\varepsilon \ \text{has a hole}  \}$. Figure \ref{fig:Pi_c} and Figure \ref{fig:hole} illustrate the appearance of this "first hole" depending on $n$, $r$ and $\epsilon$. 

Next, we develop a lemma showing that the area of the "first hole" is not $C^2$, which is a crucial reason why the map $\varepsilon \mapsto \lambda^2(\mathcal{F}_\varepsilon)$ is not $C^2$.

Let $\gg>0$. Let $P_i \in \RR^2,$ $i=1, \ldots, n$ the vertices of a regular $n$-gon with  outer radius $\gg$. Denote its centroid by $O$. Let $\theta \in [0, \frac{\pi}{n})$ and denote $Q_i \in \RR^2,$ $i=1, \ldots, n$ the vertices of the $n$-gon rotated counter-clockwise by $\theta$, see Figure \ref{fig:hole}. Let $\epsilon >  \frac{1}{2}| \overline{Q_1 P_n} |$, where $\overline{PQ} :=\conv \{P , Q \}$ denotes the line segment between $P,Q \in \RR^2$ with length $| \overline{PQ}| := \dist (P,Q)$. For $p \in \RR^2$ and $\delta \in \RR$ denote by $B(p, \delta):= \{ x\in \RR^2 \mid \| x -p \| \leq \delta \} $ the disc with center $p$ and radius $\delta.$ Denote by  
$$\mathbb{B}(\epsilon):=\mathbb{B}_{n,g,\theta} (\epsilon):= B(O,\gg) \setminus \bigcup_{i=1}^n \Big( B(P_i, \epsilon) \cup B(Q_i, \epsilon) \Big)$$ 
the blue set marked in Figure \ref{fig:hole}. Observe that $\mathcal{F}_\epsilon$ depends only on $n$, $r$ and $\epsilon$. The fixed parameters $g$ and $\theta$ are introduced to facilitate the investigation of the area of $\mathbb{B}$ as a function of $\epsilon$.

For a set $A \subset \mathbb{R}^2$, let $\partial A$ denote its boundary. Furthermore, we define the points $E_1$ and $E_2$ via the intersections:
\begin{align*}
    \{E_1\} &:= \partial B(Q_1, \epsilon) \cap \partial B(P_n, \epsilon) \cap B(O, \gg), \\
    \{E_2\} &:= \partial B(Q_n, \epsilon) \cap \partial B(P_n, \epsilon) \cap B(O, \gg).
\end{align*}

Let $A_T(\epsilon)$ be the area of the triangle $E_1E_2O$ (see Figure \ref{fig:triangle}) and let $A_S(\epsilon)$  be the area of the circular segment of $B(P_n, \epsilon)$ with secant $\overline{E_1E_2}$, determined by radius $\epsilon$ and angle $\gamma (\epsilon) := \angle E_1P_nE_2$ (see Figure \ref{fig:segment}). The area of $\mathbb{B}$ as a function of $\epsilon$ is then given by:

    $$H(\epsilon):= \lambda^2(\mathbb{B}(\epsilon)) = \begin{cases}
2n[ A_T(\epsilon) -  A_S(\epsilon)], &  \frac{1}{2}| \overline{Q_1 P_n} | < \epsilon  < \gg \\
    0 & \epsilon \geq \gg .
 \end{cases} $$

Note that, in the proof of Theorem \ref{thm:mainThm}, Steps 2–4 establish that the "first hole" of  $\mathcal{F}_\epsilon$ corresponds to $\mathbb{B}_{n,g,\theta}$ for a specific range of $\epsilon$ and fixed parameters $n$, $g$ and $\theta$.

\begin{lemma}
\label{lemma:holefunction}

    The second left derivative of $H(\epsilon)$ at $\epsilon=\gg$ is  non-zero.
\end{lemma}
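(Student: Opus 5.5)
The plan is to expand $H$ in powers of $\delta:=\gg-\epsilon$ for small $\delta>0$. Since $H\equiv 0$ for $\epsilon\ge \gg$, it then suffices to show that $H(\gg-\delta)=c\,\delta^2+O(\delta^3)$ with $c\neq 0$, where the remainder is a smooth function of $\delta$ near $0$. Then $H''(\gg^-)=2c\ne 0$. I use the formula $H=2n[A_T-A_S]$ stated before the lemma, which is valid for $\tfrac12|\overline{Q_1P_n}|<\epsilon<\gg$ and hence for $\delta$ small. If $\theta=0$ then $Q_n=P_n$ and $E_2$ is not defined, so I would treat $\theta\in(0,\pi/n)$ as the intended case.

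\emph{Step 1: locate $E_1,E_2$.} Let $P,Q$ be two points with $|OP|=|OQ|=\gg$ and central angle $\alpha=\angle POQ$. The two circles $\partial B(P,\epsilon)$ and $\partial B(Q,\epsilon)$ meet on the perpendicular bisector of $\overline{PQ}$, which passes through $O$. The midpoint $M$ of $\overline{PQ}$ satisfies $|OM|=\gg\cos(\alpha/2)$ and $|PM|=\gg\sin(\alpha/2)$. The intersection point inside $B(O,\gg)$ therefore lies on the ray from $O$ towards $M$, at distance
\[
|OE|=\gg\cos(\alpha/2)-\sqrt{\epsilon^2-\gg^2\sin^2(\alpha/2)} .
\]
Substituting $\epsilon=\gg-\delta$ and Taylor expanding the square root (which is analytic near $\delta=0$ because $\cos(\alpha/2)>0$) gives $|OE|=\delta/\cos(\alpha/2)+O(\delta^2)$. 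Thus $E_1$ and $E_2$ move smoothly towards $O$ along fixed rays, at linear rate. For $E_1$ the angle is $\alpha_1=\angle Q_1OP_n=2\pi/n-\theta$; for $E_2$ it is $\alpha_2=\angle Q_nOP_n=\theta$.

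\emph{Step 2: triangle and segment.} The angle between the two rays $OE_1$ and $OE_2$ is the angle between the two bisectors, namely $(\alpha_1+\alpha_2)/2=\pi/n$. Hence
\[
A_T=\tfrac12|OE_1|\,|OE_2|\sin(\pi/n)=\frac{\sin(\pi/n)}{2\cos(\alpha_1/2)\cos(\alpha_2/2)}\,\delta^2+O(\delta^3).
\]
The chord $\overline{E_1E_2}$ has length $O(\delta)$ and the radius is $\epsilon\to\gg>0$. So the central angle is $\gamma(\epsilon)=O(\delta)$, and the standard formula $A_S=\tfrac{\epsilon^2}{2}(\gamma-\sin\gamma)$ gives $A_S=O(\gamma^3)=O(\delta^3)$. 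Here $\gamma=2\arcsin(|E_1E_2|/(2\epsilon))$ is a smooth function of $\delta$.

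\emph{Step 3: conclude.} Combining the two steps, $H(\gg-\delta)=2n\,C\,\delta^2+O(\delta^3)$ with
\[
C=\frac{\sin(\pi/n)}{2\cos(\alpha_1/2)\cos(\alpha_2/2)}>0 ,
\]
since $\alpha_1,\alpha_2\in(0,\pi)$. The expression is smooth in $\delta$ on a one-sided neighbourhood of $0$, so the second left derivative at $\epsilon=\gg$ equals $4nC\neq0$.

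The main obstacle I expect is bookkeeping rather than analysis. One must check that the branch of the square root and the orientation of $E_i$ (on the side of $O$ towards $M$) are correct, and that the angle between $OE_1$ and $OE_2$ is exactly $\pi/n$ and not its supplement. To settle this I would first verify it on the explicit configuration with $P_n$ on the positive $x$-axis, where the bisector of $\overline{P_nQ_n}$ has direction angle $\theta/2$ and the bisector of $\overline{P_nQ_1}$ has direction angle $\theta/2-\pi/n$.
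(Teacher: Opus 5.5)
Your proposal is correct and takes essentially the same route as the paper: both use $H=2n[A_T-A_S]$, get the nonzero second-order term from the triangle area $\tfrac12|OE_1|\,|OE_2|\sin(\pi/n)$ with $|OE_i|=\delta/\cos(\alpha_i/2)+O(\delta^2)$ (your $4nC$ equals the paper's $2n\sin(\pi/n)\sec(\pi/n-\theta/2)\sec(\theta/2)$), and show that the circular segment vanishes to third order. You handle the segment somewhat more cleanly, via the chord length and $\gamma-\sin\gamma=O(\gamma^3)$, instead of the paper's $\tilde\gamma-\tilde\phi+\phi$ bookkeeping; and you do not need to exclude $\theta=0$, which is the case the main theorem uses for even $n$: if you take $E_2$ to be the point of $\partial B(P_n,\epsilon)$ on $\overline{OP_n}$, your computation goes through unchanged with $\alpha_2=0$ and $|OE_2|=\delta$.
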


\begin{figure}

    \centering
    \usetikzlibrary{decorations.pathreplacing, intersections, calc, angles, quotes}

 \begin{tikzpicture}[scale = 0.25]
  \def\g{9cm}
  \def\rsmall{4.5cm} 
  \coordinate (O) at (0, 0);

  \foreach \k in {1,...,5} {
    \pgfmathsetmacro{\pangle}{90 + 72*(\k-1)}
    \coordinate (P\k) at (\pangle:\g);
    \pgfmathsetmacro{\qangle}{75 + 72*(\k-1)}
    \coordinate (Q\k) at (\qangle:\g);
  }

  \fill[blue!20] (O) circle (\g);

  \foreach \k in {1,...,5} {
    \fill[white] (P\k) circle (\rsmall);
    \fill[white] (Q\k) circle (\rsmall);
  }

  \draw[dotted] (O) circle (\g);

  \foreach \k in {1,...,5} {
    \draw[ultra thin] (P\k) circle (\rsmall);
    \draw[ultra thin] (Q\k) circle (\rsmall);

    \fill (P\k) circle (5pt);
    \fill (Q\k) circle (5pt);
    
    \pgfmathsetmacro{\pangle}{90 + 72*(\k-1)}
    \node[font=\scriptsize] at (\pangle:\g + 1.5cm) {$P_{\k}$};
    \pgfmathsetmacro{\qangle}{75 + 72*(\k-1)}
    \node[font=\scriptsize] at (\qangle:\g + 1.5cm) {$Q_{\k}$};
  }

  \fill (0,0) circle (5pt);
  \node[below left, font=\scriptsize] at (0,0) {$O$};

    \draw[thin, dotted] (0,0) -- (P1) node[ pos=0.33, left] {$g$};
    \draw[thin, dotted] (Q1) -- (P5);
    \draw [decorate, decoration={brace, amplitude=7mm, raise=2pt}]
      (Q1) -- (P5) node [midway, sloped, above=7mm, font=\small] {$2l_2$};
    \draw[thin, dotted] (P1) -- ++(45:45mm) node [midway, above ] {$\epsilon$};

\end{tikzpicture}
    \vspace{0.3cm}
    \caption{  Illustration of the set $\mathbb{B}_{n,g,\theta}(\epsilon)$ (marked in blue) for $n = 5$ and $\theta = \pi/12$. 
    } 
    \label{fig:hole}
\end{figure} 

\begin{proof}

    Let $n\in \NN$, $n \geq 3$, $\theta \in [0,\frac{\pi}{n})$. As depicted in Figure \ref{fig:triangle}, denote
    $$l_1 := \frac{1}{2} |\overline{P_n Q_n}|= \gg \sin\left(\frac{\theta}{2}\right) \ \ \text{and} \ \ l_2 := \frac{1}{2}|\overline{Q_1 P_n}| = \gg \sin\left(\frac{\pi}{n}-\frac{\theta}{2}\right),$$ and further $$h_1:=\dist(O, \overline{P_nQ_n})=\gg\cos \left(\frac{\theta}{2}\right)\ \ \text{and} \ \ h_2 :=\dist (O, \overline{Q_1P_n})= \gg\cos \left(\frac{\pi}{n}-\frac{\theta}{2}\right).$$
    Let $\epsilon \in [l_2, \infty).$ Then the area of the red triangle in in Figure \ref{fig:triangle} is  $$A_T(\epsilon)= \frac{a(\epsilon) \cdot b(\epsilon) \cdot \sin\left(\frac{\pi}{n}\right)}{2},$$ where $a(\epsilon)$ and $b(\epsilon)$ are given by
    $$a(\epsilon)=h_2 - \sqrt{\epsilon^2 -l_2^2}=\gg \cos\left(\frac{\pi}{n}-\frac{\theta}{2}\right)-\sqrt{\epsilon^2 - g^2 \sin^2\left(\frac{\pi}{n}-\frac{\theta}{2}\right) }$$
    $$b(\epsilon)=h_1 - \sqrt{\epsilon^2 -l_1^2}=\gg \cos\left(\frac{\theta}{2}\right)-\sqrt{\epsilon^2 - \gg^2 \sin^2\left(\frac{\theta}{2}\right) }.$$ 

    \begin{figure}

    \centering
    \begin{tikzpicture} [scale = 0.6]
\clip (-9.5,-2) rectangle (14,10);
  \def\g{9cm}
  \coordinate (O) at (0, 0);

  \draw[thin,name path=W] (0,0) circle (\g);

  \foreach \k in {1,...,5} {
    \pgfmathsetmacro{\angle}{90 + 72*(\k-1)}
    \coordinate (P\k) at (\angle:\g);
    \fill (P\k) circle (1pt);
    \node at (\angle:\g + 0.3cm) {$P_{\k}$};
  }

  \foreach \k in {1,...,5} {
    \pgfmathsetmacro{\angle}{75 + 72*(\k-1)}
    \coordinate (Q\k) at (\angle:\g);
    \fill (Q\k) circle (1pt);
    \node at (\angle:\g + 0.3cm) {$Q_{\k}$};
  }

  \fill (0,0) circle (1pt);
  \node[below left=0pt] at (0,0) {$O$};

  \draw[thin, dotted] (0,0) -- (P1) node[ midway, left] {$g$};
  \draw[thin, dotted] (0,0) -- (Q1);
  \draw[thin, dotted] (0,0) -- (P5);
  \draw[thin, dotted] (0,0) -- (Q5);
  \draw[thin, dotted] (P1) -- (Q1);
  \draw[thin, dotted] (P5) -- (Q5);
  \draw[thin, dotted] (Q1) -- (P5);

  \draw[dotted, name path=cQ1] (Q1) circle ({\g/2});
  \draw[dotted, name path=cP5] (P5) circle ({\g/2});
  \draw[dotted, name path=cQ5] (Q5) circle ({\g/2});
  \draw[dotted] (Q2) circle ({\g/2});
  \draw[dotted] (P2) circle ({\g/2});
  \draw[dotted] (P1) circle ({\g/2});
  \draw[dotted] (Q3) circle ({\g/2});
  \draw[dotted] (P4) circle ({\g/2});

  \path [name intersections={of=cQ1 and cP5, by={AA, A}}];
  \path [name intersections={of=cQ5 and cP5, by={BB, B}}];

  \fill[name path=cQ] (A) circle (1pt);
  \fill[name path=cQ] (B) circle (1pt);

  \draw[red] (0,0) -- (A) node[ sloped, font=\tiny, midway, above] {$a$}-- (B) -- (0,0) node[sloped, font=\tiny, midway, below] {$b$};

  \pic [draw=black, angle radius=1cm, font=\tiny, "$\frac{\pi}{n}$"] {angle = B--O--A};
  \pic [ draw = black, angle radius=1.3cm, font=\tiny, "$\theta$"] {angle = Q1--O--P1};

  \coordinate (M1) at ($(Q1)!0.5!(P5)$);
  \coordinate (M2) at ($(Q5)!0.5!(P5)$);

  \draw[blue] (Q1) -- (A) node[sloped, font=\tiny, midway, below] {$\epsilon$} -- (M1) -- (Q1) node[ sloped, font=\tiny, midway, above] {$l_2$};

  \draw[blue,] (P5) -- (B) node[sloped, font=\tiny, midway, above] {$\epsilon$} -- (M2) -- (P5) node[sloped, font=\tiny, midway, above] {$l_1$};

  \draw [decorate, decoration={brace, amplitude=4mm, mirror, raise=2pt}]
        (O) -- (M2) node [midway, sloped, below=3mm, font=\small] {$h_1$};

  \draw [decorate, decoration={brace, amplitude=4mm, raise=2pt}]
        (O) -- (M1) node [midway, sloped, above=4mm, font=\small] {$h_2$};

 \fill (A) circle (1pt);
  \node[below right=0pt] at (A) {$E_1$};
   \fill (B) circle (1pt);
  \node[above left=0pt] at (B) {$E_2$};
\end{tikzpicture}
    \vspace{0.3cm}
    \caption{Schematic setup for the calculation of $A_T(\epsilon)$.
    } 
    \label{fig:triangle}
\end{figure}

    The area of the circular segment (red in Figure \ref{fig:segment}) is in the notation of  Figure \ref{fig:segment} given by 
     $$A_S(\epsilon)=\frac{\epsilon^2}{2}\cdot[ \gamma(\epsilon) - \sin(\gamma(\epsilon))].$$
     Where the angle $\gamma(\epsilon)$ is determined by 
      $$\gamma (\epsilon) = \tilde{\gamma} - \tilde{\phi}(\epsilon) + \phi(\epsilon),$$
      with 
    $$\tilde{\gamma}=  \frac{\pi}{2} - \alpha = \frac{\pi}{2} - \frac{\pi}{n} + \frac{\theta}{2}$$
    and
   $$\tilde{\phi}(\epsilon) = \arccos \left(\frac{l_2}{ \epsilon}\right).$$
   
   By the law of sines it is
   $$\frac{\epsilon}{\sin(\theta/2)}=\frac{b(\epsilon)}{\sin(\phi(\epsilon))}$$
   and therefore 
   $$\phi(\epsilon)= \arcsin\left(\frac{b(\epsilon)}{\epsilon} \sin\left(\frac{\theta}{2}\right)\right).$$ 
   

    \begin{figure}

    \centering
    \begin{tikzpicture} [scale=1]
\clip (-2,-1) rectangle (10,6);
  \def\g{9cm}
  \coordinate (O) at (0, 0);

  \draw[thin,name path=W] (0,0) circle (\g);

  \foreach \k in {1,...,5} {
    \pgfmathsetmacro{\angle}{90 + 72*(\k-1)}
    \coordinate (P\k) at (\angle:\g);
    \fill (P\k) circle (1pt);
    \node at (\angle:\g + 0.3cm) {$P_{\k}$};
  }

  \foreach \k in {1,...,5} {
    \pgfmathsetmacro{\angle}{75 + 72*(\k-1)}
    \coordinate (Q\k) at (\angle:\g);
    \fill (Q\k) circle (1pt);
    \node at (\angle:\g + 0.3cm) {$Q_{\k}$};
  }

  \fill (0,0) circle (1pt);
  \node[below left=0pt] at (0,0) {$O$};

  \draw[thin, dotted] (0,0) -- (P1) node[ midway, left] {$g$};
  \draw[thin, dotted] (0,0) -- (Q1);
  \draw[thin, dotted] (0,0) -- (P5);
  \draw[thin, dotted] (0,0) -- (Q5);
  \draw[thin, dotted] (P1) -- (Q1);
  \draw[thin, dotted] (P5) -- (Q5);
  \draw[thin, dotted] (Q1) -- (P5);

  \path[dotted, name path=cP1] (P1) circle ({\g/2});
  \draw[dotted, name path=cQ1] (Q1) circle ({\g/2});
  \draw[dotted, name path=cP5] (P5) circle ({\g/2});
  \draw[dotted, name path=cQ5] (Q5) circle ({\g/2});

  \path [name intersections={of=cQ1 and cP5, by={AA, A}}];
  \path [name intersections={of=cQ5 and cP5, by={BB, B}}];

  \fill[name path=cQ] (A) circle (1pt);
  \fill[name path=cQ] (B) circle (1pt);

  \coordinate (M1) at ($(Q1)!0.5!(P5)$);
  \coordinate (M2) at ($(Q5)!0.5!(P5)$);

  \draw[thin, green] (O) -- (P5) -- (M1) -- (O);
  \draw[thin] (A) -- (B) -- (P5) -- (A);
  \draw[ red] (A) -- (B);
  \draw[red] (A) arc (153.93:205.7:\g/2);
  
  \draw[dotted] (O) -- (M2);
  \pic [ draw = red, angle radius=0.75cm, "$\color{red} \gamma$"] {angle = A--P5--B};
  \pic [ draw = blue, angle radius=1.5cm, "$\color{blue} \tilde{\gamma}$"] {angle = M1--P5--O};
  
\pic [
    draw = orange,
    angle radius = 1.3cm,
    angle eccentricity = 1.0,
    "" {name=A_pos} 
] {angle = M1--P5--A};

\draw[<-, orange, thin, >=stealth] (A_pos)+(0.1,-0.1) -- ++(0.7, 0.7) 
    node[above right] {$\tilde{\phi}$};

\pic [
    draw = orange,
    angle radius = 2cm,
    angle eccentricity = 1.0,
    "" {name=B_pos} 
] {angle = O--P5--B};

\pic [
    draw = black,
    angle radius = 2.3cm,
    angle eccentricity = 1.0,
    "" {name=C_pos} 
] {angle = B--O--P5};

\draw[<-, orange, thin, >=stealth] (B_pos)+(0.1,0.1) -- ++(0.7, -0.7) 
    node[below right] {$\phi$};

\draw[<-, thin, >=stealth] (C_pos)+(-0.2,-0.0) -- ++(0.7, -0.7) 
    node[below right] {$\frac{\theta}{2}$};

  \pic [draw=black, angle radius=0.7cm, font=\tiny, "$\alpha$"] {angle = P5--O--A};
  \pic [ draw = black, angle radius=1cm, font=\tiny, "$\theta$"] {angle = Q1--O--P1};
  \pic [ draw = black, angle radius=0.7cm, font=\tiny, "$\pi /2$"] {angle = O--M1--P5};

   \fill (A) circle (1pt);
  \node[above left=0pt] at (A) {$E_1$};
   \fill (B) circle (1pt);
  \node[below left=0pt] at (B) {$E_2$};
\end{tikzpicture}
    \vspace{0.3cm}
    \caption{Schematic setup for the calculation of $A_S(\epsilon)$.
    } 
    \label{fig:segment}
\end{figure} 

Calculate the second left derivative of $A_T(\epsilon) = \frac{1}{2} \cdot a(\epsilon) \cdot b(\epsilon) \cdot \sin(\frac{\pi}{n})$ in $\gg$:


On $(l_2, \infty)$ it is:

\[
A_T''
= \frac{1}{2}\sin\left(\frac{\pi}{n}\right)
\big(a''b+2a'b'+ab''\big).
\]

At $\epsilon=\gg$, since $a(\gg)=b(\gg)=0$ and $a''$, $b''$ are locally bounded at $\epsilon = \gg$,
\[
A_T''(\gg)
= \sin\left(\frac{\pi}{n}\right)a'(\gg)b'(\gg).
\]

Using
\[
a'(\gg)=-\sec\Big(\frac{\pi}{n}-\frac{\theta}{2}\Big),
\qquad
b'(\gg)=-\sec\frac{\theta}{2},
\]
we obtain
\begin{equation}
\label{eq:T''(g)}
    A_T''(\gg)
=\sin\Big(\frac{\pi}{n}\Big)
\sec\Big(\frac{\pi}{n}-\frac{\theta}{2}\Big)
\sec\frac{\theta}{2} \not = 0.
\end{equation}

Calculate the second left derivative of $A_S(\epsilon)=\frac{\epsilon^2}{2}\cdot[ \gamma(\epsilon) - \sin(\gamma(\epsilon))]$ in $\gg$:

With $\Gamma(\epsilon):=\gamma(\epsilon)-\sin(\gamma(\epsilon))$ we have $A_S(\epsilon)
= \frac{\epsilon^{2}}{2}\Gamma (\epsilon)$ and therefore on $(l_2, \gg)$ it is:

\begin{equation}
\label{eq:S''}
    A_S''(\epsilon)
= \Gamma (\epsilon)+2\epsilon \Gamma'(\epsilon)+\frac{\epsilon^{2}}{2}\Gamma''(\epsilon).
\end{equation}

With 
$$\Gamma'(\epsilon)
= \gamma'(\epsilon)\big(1-\cos\gamma(\epsilon)\big)$$
and
$$\Gamma''(\epsilon)
= \gamma''(\epsilon)\big(1-\cos\gamma(\epsilon)\big)
+ \gamma'(\epsilon)^{2}\sin\gamma(\epsilon).$$

At \(\epsilon=\gg\), since $\gamma(g)=0$, we obtain
$$\Gamma(\gg)=\Gamma'(\gg)=\Gamma''(\gg)=0$$
and therefore
\begin{equation}
\label{eq:S''(g)}
    A_S''(\gg)
= \Gamma(\gg)+2\gg\Gamma'(\gg)+\frac{\gg^{2}}{2}\Gamma''(\gg)
= 0.
\end{equation}

The Equations \ref{eq:T''(g)} and \ref{eq:S''(g)} together gives that the second left derivative of $H(\epsilon)$ at $\epsilon = \gg$ is non-zero.
    
\end{proof}

\begin{remark}
\label{remark:LemmaHoleFunction}
  Following the computations from the proof of Lemma \ref{lemma:holefunction}, one can easily see that $H(\epsilon)$ is $C^1$ for all $\epsilon$. Moreover, $H$ is a smooth function on its entire domain excluding the point g.
\end{remark}

Let the minimal gap width between the first-generation components of $\mathcal{F}$ be denoted by $$l:=\dist (\mathcal{S}_1(\mathcal{F}), \mathcal{S}_2(\mathcal{F})).$$

\begin{proposition}
\label{Prop:periodic}
The function $f(\epsilon) = \frac{\lambda^2(\mathcal{F}_\epsilon)}{\epsilon^{2-D}}$ is log-periodic on $(0, r^{-1}\cdot l/2].$ 
\end{proposition}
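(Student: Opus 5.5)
We would show that for every $\epsilon \in (0, r^{-1} l/2]$ the parallel set satisfies the scaling identity
\[
\lambda^2(\mathcal{F}_{r\epsilon}) = n\, r^2\, \lambda^2(\mathcal{F}_{\epsilon}).
\]
Dividing by $(r\epsilon)^{2-D} = r^{2-D}\epsilon^{2-D}$ and using $r^{-D} = n$ (which follows from $D=-\ln n/\ln r$) gives
\[
f(r\epsilon) = \frac{n r^2}{r^{2-D}}\, f(\epsilon) = n r^D f(\epsilon) = f(\epsilon),
\]
which is log-periodicity with scaling factor $\rho = r$ on $(0, r^{-1}l/2]$.

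\textbf{Step 1: decompose $\mathcal{F}_{r\epsilon}$.} By self-similarity $\mathcal{F} = \bigcup_{i=1}^n \mathcal{S}_i(\mathcal{F})$, so
\[
\mathcal{F}_{r\epsilon} = \bigcup_{i=1}^n \big(\mathcal{S}_i(\mathcal{F})\big)_{r\epsilon}.
\]
Since $\mathcal{S}_i$ is a similarity with ratio $r$, we have $(\mathcal{S}_i(\mathcal{F}))_{r\epsilon} = \mathcal{S}_i(\mathcal{F}_\epsilon)$. Hence each piece has area $r^2 \lambda^2(\mathcal{F}_\epsilon)$.

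\textbf{Step 2: the pieces overlap only in a null set.} This is the main point. For $i \neq j$, a point lying in both $(\mathcal{S}_i\mathcal{F})_{r\epsilon}$ and $(\mathcal{S}_j\mathcal{F})_{r\epsilon}$ would force
\[
\dist(\mathcal{S}_i\mathcal{F}, \mathcal{S}_j\mathcal{F}) \le 2r\epsilon \le l
\]
by the triangle inequality. So we need $l$ to be the minimum of $\dist(\mathcal{S}_i\mathcal{F},\mathcal{S}_j\mathcal{F})$ over all pairs $i\ne j$. By the rotational symmetry of the $n$-gon, only the index difference $|i-j|$ matters, and adjacent pieces (such as $\mathcal{S}_1,\mathcal{S}_2$) are the closest. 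To justify this, we would use that $\mathcal{S}_i(\mathcal{F}) \subset \mathcal{S}_i(\overline{\OO})$, a polygon of circumradius $r$ times the original one, centred at a point on a circle of radius $(1-r)$ times the original. The distance between two such centres increases with $|i-j|$ up to $n/2$. The difficulty is turning this into a comparison of the distances between the fractal pieces themselves, not just between their hulls. We would handle this by symmetry: the closest points of $\mathcal{S}_i\mathcal{F}$ and $\mathcal{S}_j\mathcal{F}$ are images of vertices (the points $\mathcal{S}_i\mathcal{S}_k\cdots$ accumulating at the vertex $V$-images), and we would compare explicitly. If a clean argument fails, we would define $l$ as the minimum over all pairs, which is what the overlap argument actually uses.

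Once $\dist > 2r\epsilon$, the two sets are disjoint. If equality holds, i.e.\ at the endpoint $\epsilon = r^{-1}l/2$, two closed sets at distance exactly $2r\epsilon$ intersect only in points where the two $r\epsilon$-balls touch. By compactness this is a set of measure zero; we would argue it lies in the intersection of two circle families, which is finite or has measure zero. Therefore
\[
\lambda^2(\mathcal{F}_{r\epsilon}) = \sum_{i=1}^n \lambda^2\big(\mathcal{S}_i(\mathcal{F}_\epsilon)\big) = n r^2 \lambda^2(\mathcal{F}_\epsilon),
\]
which gives the identity above and hence the conclusion.
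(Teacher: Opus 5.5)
Your proposal is correct and follows the paper's proof. Both derive $\lambda^2(\mathcal{F}_{r\epsilon}) = n r^2 \lambda^2(\mathcal{F}_\epsilon)$ for $\epsilon \le r^{-1}l/2$ from $\mathcal{S}_i(\mathcal{F}_\epsilon) = (\mathcal{S}_i\mathcal{F})_{r\epsilon}$ and then use $r^{-D} = n$. You also spell out the disjointness of the first-level pieces, which the paper leaves implicit, and your reading of $l$ as the minimum gap over all pairs matches how the paper introduces $l$, namely as the ``minimal gap width'' between first-generation components.

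One small point at the endpoint $\epsilon = r^{-1}l/2$: compactness is not the reason the overlap is null. Instead, any common point $x$ satisfies $\dist(x,\mathcal{S}_i\mathcal{F}) = l/2$ exactly, and level sets of a distance function at a positive level have Lebesgue measure zero.
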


\begin{proof}

Recall that, in general, for a self-similar set $F$ generated by an self-similar system $\mathcal{S}=\{ S_1, \ldots , S_N \}$, where $\rho_i$ is the scaling ratio of $S_i$ for $i= 1, \ldots , N$, the following holds for all $\epsilon \ge 0$: $$S_i(F_\epsilon) = (S_i(F))_{\rho_i \epsilon}.$$

Let $r$ be the contraction ratio of $\mathcal{S}_i$ for all $i=1, \ldots ,n$. Therefore it holds $$\lambda^2 (\mathcal{F}_{r\cdot \epsilon}) = n\cdot r^{2} \lambda^2(\mathcal{F}_{\epsilon}) \ \ \ \forall \epsilon \leq r^{-1}\cdot l/2.$$   Further it is $r^{-D}= r^{\frac{\ln n}{\ln r}}=n$ and together we have
    $$f(r\cdot \epsilon)= \frac{ \lambda^2(\mathcal{F}_{r\cdot \epsilon})}{ (r\cdot \epsilon)^{2-D}}= \frac{n\cdot r^{2} \lambda^2(\mathcal{F}_{\epsilon})}{r^{2} \cdot r^{-D} \epsilon^{2-D}} = \frac{\lambda^2(\mathcal{F}_\epsilon)}{\epsilon^{2-D}} = f(\epsilon) \ \ \forall \epsilon \leq r^{-1}\cdot l/2.$$
   Thus $f$ is log-periodic with scaling factor $r$. 

\end{proof}

\begin{theorem}
\label{thm:mainThm}

  Let $n\in \NN$, $n \geq  3$ and let $\mathcal{F}^{n,r}$ be an $n$-flake dust with $r< l/(2g)$. Then  $\mathcal{F}^{n,r}$ is not Minkowski measurable. 
\end{theorem}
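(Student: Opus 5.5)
Here $\gg$ denotes the circumradius of the first-level cluster of points that produces the "first hole". Concretely, $\gg$ is the distance from the centre $O$ of the $n$-gon to the nearest points of $\mathcal{F}$. The plan rests on two ingredients: Proposition~\ref{Prop:periodic} gives that $f$ is log-periodic on $(0,r^{-1}l/2]$, and Lemma~\ref{lemma:quotient_ist_nicht_konst} reduces non-constancy of $f$ to showing that $V(\epsilon):=\lambda^2(\mathcal{F}_\epsilon)$ fails to be $C^2$ at a single point $\epsilon_0$ in that range. The hypothesis $r<l/(2\gg)$ is arranged exactly so that $\epsilon_0=\gg$ lies in $(0,r^{-1}l/2]$, more precisely $\gg< l/(2r)$. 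Once this is shown, $f$ log-periodic and non-constant gives $\liminf_{\epsilon\to0} f<\limsup_{\epsilon\to 0} f$, so the Minkowski content does not exist.

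\textbf{Step 1: locate the first hole.} The points of $\mathcal{F}$ nearest to $O$ come from the first-level copies $\mathcal{S}_i(\mathcal{F})$. Each copy has a second-level piece closest to $O$, and by symmetry these nearest points form configurations like the $P_i,Q_i$ on a circle of radius $\gg$ about $O$, rotated by some $\theta\in[0,\pi/n)$. I would identify $\gg$ and $\theta$ explicitly from the IFS. I would also show that all other points of $\mathcal{F}$ lie at distance $>\gg$ from $O$, or at least outside the region relevant for $\epsilon$ near $\gg$.

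\textbf{Step 2: localise $V$ near $\epsilon=\gg$.} For $\epsilon$ in a small left neighbourhood $(\gg-\delta,\gg]$ I would decompose
\[
\lambda^2(\mathcal{F}_\epsilon)=\lambda^2\big(\mathcal{F}_\epsilon\cup \overline{B(O,\gg)}\big)-H(\epsilon),
\]
at least up to a common region. Within $B(O,\gg)$, the complement of $\mathcal{F}_\epsilon$ equals $\mathbb{B}_{n,\gg,\theta}(\epsilon)$, because only the points $P_i,Q_i$ matter there. Outside the disc, the remaining contributions to the area have to be shown $C^2$ near $\gg$. I would argue via Theorem~\ref{Thm: Stracho}: $V'(\epsilon)$ equals the boundary length, and that length is a sum of arc lengths of circles whose intersection points move analytically in $\epsilon$. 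The exception is the hole-point event at $O$ at $\epsilon=\gg$, and possibly other hole or critical events. So I must also check that no other critical value of $d_{\mathcal{F}}$ equals $\gg$. Other hole values lie at different scales: the smaller ones $r^k\gg$, and gaps around $l/2$. The condition $r<l/(2\gg)$ should separate them, since $\gg<l/(2r)$ and all scaled copies give values $r^k\gg<\gg$. Alternatively I could restrict to the disc $B(O,\gg+\eta)$ and argue that the part of $\mathcal{F}_\epsilon$ outside it varies smoothly.

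\textbf{Step 3: conclude.} By Lemma~\ref{lemma:holefunction} and Remark~\ref{remark:LemmaHoleFunction}, $H$ is $C^1$ with second left derivative nonzero at $\gg$, while $H\equiv0$ for $\epsilon\ge\gg$, so its right second derivative is $0$. Hence $V$, being a smooth function minus $H$ near $\gg$, is $C^1$ but not $C^2$ at $\gg$. Since $\epsilon^{2-D}$ is smooth and nonvanishing, Lemma~\ref{lemma:quotient_ist_nicht_konst} gives that $f$ is not constant on any neighbourhood of $\gg$. Log-periodicity then propagates this non-constancy to arbitrarily small scales.

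\textbf{Main obstacle.} The hardest part is Step 2: rigorously showing that every other contribution to $V$ is $C^2$ at $\epsilon=\gg$. This requires a clean geometric description of the nearest configuration, showing it really is two rotated regular $n$-gons on one circle, and verifying that no other critical value coincides with $\gg$. I would attack it by classifying the critical points of $d_{\mathcal{F}}$ at scale $\approx\gg$ using the strong separation and the bound $r<l/(2\gg)$.
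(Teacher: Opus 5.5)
Your architecture matches the paper's: log-periodicity on $(0,l/(2r)]$ with Lemma~\ref{lemma:quotient_ist_nicht_konst}, the central hole at $O$ with area $H$, the splitting $\lambda^2(\mathcal{F}_\epsilon)=(\text{regular part})-H(\epsilon)$, Theorem~\ref{Thm: Stracho} for the regular part, and Lemma~\ref{lemma:holefunction} to conclude. The gap is in the step you flag as the main obstacle, and your sketched justification for it would fail as written.

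For $\epsilon$ near $g$, the boundary of $\mathcal{F}_\epsilon$ is not made of finitely many arcs. On each edge $[V_i,V_{i+1}]$ of $K$, the set $\mathcal{F}\cap[V_i,V_{i+1}]$ is a Cantor set (generated by $\mathcal{S}_i,\mathcal{S}_{i+1}$) with $2^k$ gaps of width $\lambda r^k$ at level $k$. Since $\lambda<2g$, every one of these gaps is bridged near $\epsilon=g$ and produces a notch. The boundary length is therefore an infinite sum of analytic functions of $\epsilon$. Saying that ``the intersection points move analytically'' gives no $C^1$-regularity of this sum without uniform control of the derivatives.

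The missing idea is to get that control from self-similarity, as Step~4 of the paper does via \eqref{eq: recursion}:
\begin{itemize}
\item The outer boundary length at scale $\epsilon$ is $2r$ times the same length at scale $\epsilon/r$ with the copies' corner arcs removed, since each first-level copy exposes only two of its $n$ edge strips.
\item One then adds $2\pi\epsilon$ for the corners at $V_1,\dots,V_n$.
\item One also adds the explicit, visibly smooth first-level notch term $R(\epsilon)=2n\epsilon\,\alpha(\epsilon)$, where $\alpha(\epsilon)=\pi/2-\arccos(\lambda/(2\epsilon))$.
\end{itemize}
The only non-explicit term then lives at scale $\epsilon/r$, away from $g$. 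Alternatively, you can sum the notches directly. A bridged gap of width $w$ contributes $2\epsilon\arcsin(w/(2\epsilon))$, whose $\epsilon$-derivative is $O((w/\epsilon)^3)$. Hence the level-$k$ contributions to the derivative are $O((2r^3)^k)$, and the series converges in $C^1$ (indeed in $C^2$). Without one of these arguments, your Step~2 is not a proof.

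A secondary point: the hypothesis $r<l/(2g)$ does not separate $g$ from other critical values of $d_{\mathcal{F}}$. Its only role is the one you state at the outset, namely placing $g$ in the window where Proposition~\ref{Prop:periodic} gives log-periodicity. The separation is a separate geometric input, Step~2 of the paper. There, $2g>\lambda$ ensures the first-level gaps on $\partial K$ are bridged before $\epsilon$ reaches $g$, while rescaled copies of the central hole close at $r^k g<g$. You also need this in the form that $g$ is isolated, so that $\mathcal{F}_\epsilon$ has exactly one hole for $\epsilon\in(g-\delta,g)$. It is not enough that no other critical value equals $g$; otherwise the remainder need not be $C^2$ on any neighbourhood of $g$.
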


\begin{remark}
    The requirement $r< l/(2g)$  (equivalently $g < r^{-1}\cdot l/2$) guarantees that $g$ is within a periodic cycle of the function $f$, see the proof of Proposition \ref{Prop:periodic}. Note that for $n=4$ one has $r<l/(2g)$ for all $r \in (0, 1/2).$ 
\end{remark}

\begin{proof}
Fix $n\in \NN$, $n \geq 3$ and fix $r \in \RR$ such that $r< l/(2g)$.
By Proposition \ref{Prop:periodic} $f(\epsilon)  = \frac{\lambda^2(\mathcal{F}_\epsilon)}{\epsilon^{2-D}}$ is log-periodic with scaling factor $r$. It follows, if $\lambda^2(\mathcal{F}_\epsilon)$ is not a smooth function, then $\mathcal{F}$ is not Minkowski measurable by Lemma \ref{lemma:quotient_ist_nicht_konst}.

\textbf{Step 1:} Claim. The set of points in $\mathcal{F}$ at distance $g$ from a hole point of $\mathcal{F}$ is finite.

Clearly $0< g < \infty$ holds.  Let 
$$\hol_g (\mathcal{F}):= \{ x \in \hol (\mathcal{F}) : d(x, \mathcal{F}) = g \}$$
and for $c\in \hol_g (\mathcal{F})$ denote 
$$\Pi_c := \Pi_c(\mathcal{F}) := \{ y \in \mathcal{F} : \|c-y\| = g \}.$$

Further let be $K:= \conv( \{ V_1, \ldots ,V_n \})$ the convex hull of $ \{ V_1, \ldots ,V_n \}$, where $V_1, \ldots , V_n$ are the vertices of the underlying  $n$-gon, see Section \ref{sec:n-flake}.  Because of $ g= \sup \{ \epsilon > 0 : \mathcal{F}_\varepsilon \ \text{has a hole}  \}$, there is only one element in $\hol_g (\mathcal{F}) = \{O \}$, the  centroid  of $\mathcal{F}$. Thus the set $\Pi_O$ has $n$ elements if a vertex of $\mathcal{S}_1 K$ has the closest distance to the centroid of $\mathcal{F}$ , else $2n$ elements (see Figure \ref{fig:Pi_c} ) In particular $\Pi_O$ is a finite set.

 \begin{figure}
     \centering

\begin{tabular}{cc}

\begin{tikzpicture}[scale=0.35, line join=round]

  \def\R{9}      
  \def\rBlue{3} 
  \def\rRed{1} 

  \draw[thin, gray] (0,0) circle [radius=3.61];
  \fill (0,0) circle (2pt) node[below] {$O$};

  \foreach \a in {18, 90, 162, 234, 306} {
    
    \begin{scope}[shift={(\a:{\R-\rBlue})}]
      \path (18:\rBlue) -- (306:\rBlue) -- cycle; 

      \foreach \aa in {18, 90, 162, 234, 306} {
        
        \begin{scope}[shift={(\aa:{\rBlue-\rRed})}]
          
          \foreach \v in {18, 90, 162, 234, 306} {
            \draw[black, thin] (\v:\rRed) -- (\v+72:\rRed);
            
            \coordinate (P) at (\v:\rRed);
            
          }
          
        \end{scope}
      }
    \end{scope}
  }

\fill[red] (8.7:3.61) circle (3.5pt);
\fill[red] (27.3:3.61) circle (3.5pt);
\fill[red] (80.7:3.61) circle (3.5pt);
\fill[red] (99.3:3.61) circle (3.5pt);
\fill[red] (152.7:3.61) circle (3.5pt);
\fill[red] (171.3:3.61) circle (3.5pt);
\fill[red] (224.7:3.61) circle (3.5pt);
\fill[red] (243.3:3.61) circle (3.5pt);
\fill[red] (296.7:3.61) circle (3.5pt);
\fill[red] (315.3:3.61) circle (3.5pt);

\fill[red] (139.5:7.5) circle (3.5pt);

\fill[] (90:\R) circle (3.5pt) ;
\node[above]  at (90:\R) {$v_2$};
\fill[] (162:\R) circle (3.5pt);
\node[above]  at (162:\R) {$v_1$};
\fill[] (112.5:7.5) circle (3.5pt);
\node[above]  at (112.5:7.5) {$\tilde{v}_2$};
\fill[] (139.5:7.5) circle (3.5pt);
\node[above right]  at (139.5:7.5) {$\tilde{v}_1$};

\coordinate (A) at (90:9);
\coordinate (B) at (18:9);
\coordinate (C) at (67+72:7.53);
\coordinate (D) at (41+72:7.53);
\coordinate (E) at (63+72:3.83);
\coordinate (F) at (45+72:3.83);

\draw [decorate, decoration={brace, amplitude=4mm, raise=2pt}]
        (A) -- (B) node [midway, sloped, above=4mm] {$s$};
\draw [decorate, decoration={brace, amplitude=4mm, raise=2pt,}]
        (C) -- (D) node [midway, sloped, above=4mm] {$\lambda$};
\draw [decorate, decoration={brace, amplitude=2mm, raise=2pt,}]
        (E) -- (F) node [midway, sloped, above=2mm] {$l$};
\end{tikzpicture}

    & 

\begin{tikzpicture}[line join=round, scale=0.26]

  \def\R{12.25}       
  \def\rBlue{3.5}  
  \def\rRed{1}  

  \path[thick] (0:\R) -- (60:\R) -- (120:\R) -- (180:\R) -- (240:\R) -- (300:\R) -- cycle;

  \foreach \a in {0, 60, 120, 180, 240, 300} {
    
    \begin{scope}[shift={(\a:{\R-\rBlue})}]
      
      \path[blue, thick] (0:\rBlue) -- (60:\rBlue) -- (120:\rBlue) -- (180:\rBlue) -- (240:\rBlue) -- (300:\rBlue) -- cycle;

      \foreach \aa in {0, 60, 120, 180, 240, 300} {
        
        \begin{scope}[shift={(\aa:{\rBlue-\rRed})}]
          
          \draw[thin] (0:\rRed) -- (60:\rRed) -- (120:\rRed) -- (180:\rRed) -- (240:\rRed) -- (300:\rRed) -- cycle;
          
        \end{scope}
      }
    \end{scope}
  }
 \draw[thin, gray] (0,0) circle [radius=5.24];
   \fill (0,0) circle (2.8pt) node[below] {$O$};
 
 \fill[red] (0:5.24) circle (4.5pt);
 \fill[red] (60:5.24) circle (4.5pt);
 \fill[red] (120:5.24) circle (4.5pt);
 \fill[red] (180:5.24) circle (4.5pt);
 \fill[red] (240:5.24) circle (4.5pt);
 \fill[red] (300:5.24) circle (4.5pt);
 
\end{tikzpicture}
\\ 
\end{tabular}

     \caption{Elements of $\Pi_O$ (red) for $n = 5$ (left, 10 elements) and $n = 6$ (right, 6 elements).}
     \label{fig:Pi_c} 
    
 \end{figure} 

\textbf{Step 2:} Claim. The value $g \in \hv(\mathcal{F})$ is isolated.

By self-similarity of $\mathcal{F}$ it is enough to look at hole points $\tilde{c} \in K\setminus(\bigcup_{i=1}^n \mathcal{S}_iK)$. Note that then it holds $\tilde{c} \in \text{Unp}(\bigcup_{i=1}^n \mathcal{S}_iK)^{\mathsf{c}}\cap K$, where for $F \in \mathcal{H}(\mathbb{R}^d)$  $\mathrm{Unp}(F)$ denote the set of points in $\mathbb{R}^d$ having a unique nearest point in $F$. Observe that the centroid $O$ (by Step 1 the only element in $\hol_g (\mathcal{F})$) is the point in $\text{Unp}(\bigcup_{i=1}^n \mathcal{S}_iK)^{\mathsf{c}} \cap K$ having the largest distance to $\mathcal{F}$. If $n$ is odd, the centroid is also centered to the biggest gaps of $\big(\overline{K\setminus\bigcup_{i=1}^nS_i K}\big ) \cap \mathcal{F}$, therefore $g$ is isolated in $\hv (\mathcal{F})$ (see Figure \ref{fig:Pi_c}). 
If $n$ is even, vertices of $\mathcal{S}_1 K$ have the closest distance to the centroid of $\mathcal{F}$ and therefore $g$ is isolated. In both cases it is $g \leq 1 - 2r$, $s:= |\overline{V_1V_2|}=2 \sin(\pi/n)$ and $\lambda := |\overline{\tilde{V}_1 \tilde{V}_2}|= s - 2rs$, with $\tilde{V}_1:= V_1+r(V_2-V_1)$ and $\tilde{V}_2:= V_2+r(V_1-V_2)$, see Figure \ref{fig:Pi_c}. Therefore it is clear that $2g> \lambda$, thus $g$ is isolated.

 \begin{figure}
     \centering
     \begin{tikzpicture}[scale=0.5, line join=round]

  \def\R{6.25}      
  \def\rBlue{2.5}  
  \def\rRed{1}   

  \draw[gray] (90:\R) -- (210:\R) -- (330:\R) -- cycle;

  \foreach \a in {90, 210, 330} {
    
    \begin{scope}[shift={(\a:{\R-\rBlue})}]

      \filldraw[fill=blue!20, draw=black, line width=1pt] (90:\rBlue) -- (210:\rBlue) -- (330:\rBlue) -- cycle; 

      \foreach \aa in {90, 210, 330} {
        
        \begin{scope}[shift={(\aa:{\rBlue-\rRed})}]
          
          \draw[gray] (90:\rRed) -- (210:\rRed) -- (330:\rRed) -- cycle;
          
        \end{scope}
      }
       \draw[thick] (90:\rBlue) -- (210:\rBlue) -- (330:\rBlue) -- cycle; 
      
    \end{scope}
  }
    \draw[red, thick] (0,0) -- ($(90:\R) ! 0.5 ! (210:\R)$);
    \draw[red, thick] (0,0) -- ($(330:\R) ! 0.5 ! (210:\R)$);
    \draw[red, thick] (0,0) -- ($(90:\R) ! 0.5 ! (330:\R)$);
 \fill (0,0) circle (1.8pt) node[below left] {$O$};
\end{tikzpicture}
	
     \caption{The set $\text{Unp}(\bigcup_{i=1}^n S_iK)^{\mathsf{c}}\cap K$ (red) for $n=3$.}
     \label{fig:Ung} 
    
 \end{figure} 

\textbf{Step 3:} 
From Step 2 there exists a $\delta > 0$ such that $\mathcal{F}_\epsilon$ has exactly one hole for $\epsilon \in (g-\delta, g)$. From Step 1 we see that  for $\epsilon \in (g- \delta, g)$ the hole of $\mathcal{F}_\epsilon$ is the same as the hole of $(\Pi_O)_\epsilon$. But then the area of the hole is with Lemma \ref{lemma:holefunction} given by the function $H:(g-\delta, \infty) \to [0, \infty ), \epsilon \mapsto H(\epsilon)$ that has no second derivative in $g$.

\textbf{Step 4:} Claim. The function $\lambda^2(\mathcal{F}_\epsilon)$ is not smooth at $g$. 

Let $\mathcal{F}^*:=\mathcal{F} \cup B(O, g)$. By Step 3, for $\epsilon > g-\delta$ we have
\begin{equation}
\label{eq:LambdaOhneLoco}
    \lambda^2(\mathcal{F}_\epsilon)=\lambda^2(\mathcal{F}^*_\epsilon)- H(\epsilon).
\end{equation}

Therefore, we assume that $\lambda^2(\mathcal{F}^*_\epsilon) \in C^2((g-\delta, l)\setminus \{g\}).$ We will show that $\lambda^2(\mathcal{F}^*_\epsilon) \in C^2((g-\delta, l))$ holds: By Theorem \ref{Thm: Stracho}, we have 
$$\lambda^2(\mathcal{F}^*_\epsilon) \in C^2((g-\delta, l)\setminus \{g\}) \Leftrightarrow \mathcal{H}^1(\partial \mathcal{F}^*_\epsilon) \in C^1((g-\delta, l)\setminus \{g\})$$ 
and thus it is enough to show that $\mathcal{H}^1(\partial \mathcal{F}^*_\epsilon) \in C^1((g-\delta, l)).$ By self-similarity of $\mathcal{F}$ it holds that 
\begin{equation}
\label{eq: recursion}  
\mathcal{H}^1(\partial \mathcal{F}^*_\epsilon) = r \cdot 2 \cdot \big[\mathcal{H}^1(\partial \mathcal{F}^*_{r^{-1}\epsilon}) -r^{-1}\cdot 2 \epsilon \pi \big] +2 \epsilon \pi + R(\epsilon) \ \forall \epsilon \in (g-\delta, l/(2r)).
\end{equation}

By assumption, the first summand in Equation \eqref{eq: recursion} is $C^1$. The third summand $R(\epsilon)$ is given by 
$$R(\epsilon)= 2n\cdot  \epsilon \cdot \alpha(\epsilon) \ \forall \epsilon \in (g-\delta, l/(2r)),$$
where $\alpha(\epsilon) := \pi /2 -\beta (\epsilon)$ with $\beta (\epsilon ) := \angle \tilde{V}_1 \tilde{V}_2 w$.  Here, $w$ is the intersection point of the perpendicular bisector 
of $\overline{\tilde{V}_1 \tilde{V}_2}$ and $\partial \mathcal{F}_\epsilon$ that is closest to $\overline{\tilde{V}_1 \tilde{V}_2}$ (see Figure \ref{fig:R(epsilon)}). Clearly,  
$$\beta (\epsilon) = \arccos \left(\frac{|\overline{\tilde{V}_1 \tilde{V}_2}|}{2\epsilon} \right) \ \forall \epsilon \in (g-\delta, l/(2r)).$$
Thus, $R(\epsilon)$ is a $ C^1((g-\delta, l))$ function, which implies that $\lambda^2(\mathcal{F}^*_\epsilon) \in C^2((g-\delta, l))$. Consequently, by Equation \eqref{eq:LambdaOhneLoco}, Lemma \ref{lemma:holefunction} and Remark \ref{remark:LemmaHoleFunction}, $\lambda^2(\mathcal{F}_\epsilon)$ is not smooth at $g$. By Proposition \ref{Prop:periodic}, this concludes that $\mathcal{F}$ is not Minkowski measurable.

 \begin{figure}
     \centering

   \includegraphics[scale=0.8]{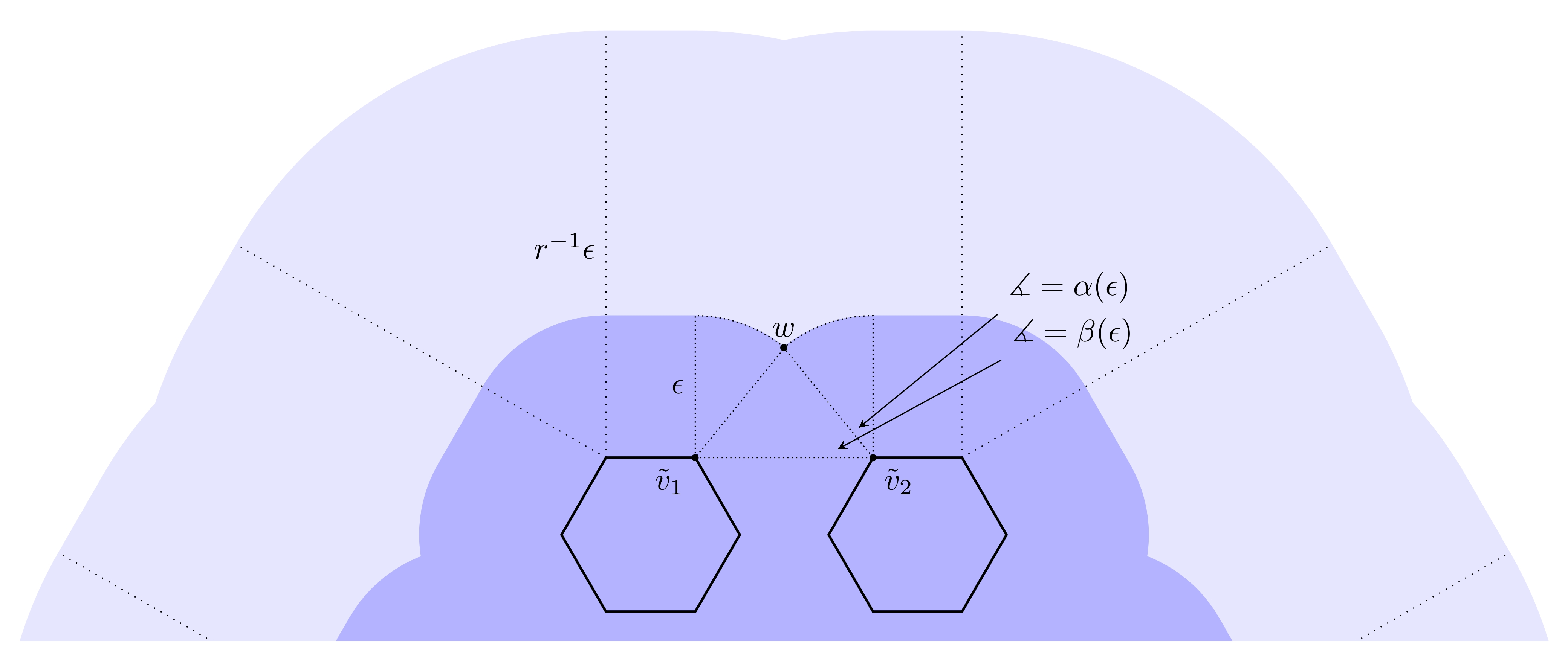}

     \caption{Schematic setup for the calculation of $R(\epsilon)$.}
     \label{fig:R(epsilon)} 
    
 \end{figure}

\end{proof}

\begin{remark}
For $n=4$ and $r<1/2$, we have $F^{n,r}= C^{1/r}$, the Cantor dust set discussed in \cite{freiberg2026family}.  The main result of \cite{freiberg2026family} states that $C^{1/r}$ is not Minkowski measurable for $r \leq 1/30$. For $1/30 < r < 1/2$, it is only conjectured (and supported by numerical evidence) that $C^{1/r}$ is not Minkowski measurable.
\end{remark}

\begin{remark}
    Note that if $r = R_n$, then the attractor is a \emph{Sierpinski n-gons} (for $n=3$ the classical Sierpinski Triangle). In this case the function $f(\epsilon) = \frac{\lambda^2(\mathcal{F}_\epsilon)}{\epsilon^{2-D}}$ is not periodic, cf. Proposition \ref{Prop:periodic}.
\end{remark}


 \section*{Acknowledgements} We would like to express our gratitude to Steffen Winter and Sebastian Lämmel for their careful proofreading. Their insightful comments greatly improved the clarity and accuracy of this work.  In particular, thanks are due to Steffen Winter for his idea to invoke Theorem \ref{Thm: Stracho}. Additionally, we acknowledge the support of the European Social Fund (ESF) and TU Chemnitz.

\bibliography{refs}

\end{document}